\documentclass[final,3p]{CSP}
\usepackage{amssymb}
\usepackage{changepage}

\usepackage{amsmath}
\usepackage{graphicx}
\usepackage[colorinlistoftodos]{todonotes}
\usepackage[colorlinks=true, allcolors=blue]{hyperref}
\usepackage{enumitem}

\usepackage{amsmath,verbatim,amssymb,amsfonts,amscd, graphicx}
\usepackage{mathabx}
\usepackage{breqn}
\usepackage{float}
\usepackage{graphics}
\usepackage{stmaryrd} 
\usepackage{textcomp}
\usepackage{setspace}
\usepackage{algorithm}
\usepackage{textgreek}
\usepackage{algpseudocode}
\usepackage{amsthm}
\usepackage{enumitem}   
\usepackage{titlesec}
\usepackage{caption, subcaption}
\usepackage[toc]{appendix}

\usepackage{ragged2e}

\titleformat{\paragraph}
{\normalfont\normalsize\bfseries}{\theparagraph}{1em}{}
\titlespacing*{\paragraph}
{0pt}{3.25ex plus 1ex minus .2ex}{1.5ex plus .2ex}

\theoremstyle{plain}

\newtheorem{proposition}{Proposition}[section]

\newtheorem{assumption}{Assumption}[section]
\newtheorem{theorem}[proposition]{Theorem}
\newtheorem{lemma}[proposition]{Lemma}
\theoremstyle{remark}
\newtheorem{remark}{Remark}[section]

\setcounter{secnumdepth}{3}

\title{  }

\begin{document}

\begin{frontmatter}

\title{A unifying vision of Particle Filtering and Explicit dual Model Predictive Control}

\author[UNIMELB]{Emilien  Flayac}\ead{emilien.flayac@unimelb.edu.au}    
\author[ONERA]{Karim Dahia}\ead{karim.dahia@onera.fr}               
\author[ONERA]{Bruno H\'eriss\'e}\ead{bruno.herisse@onera.fr}   
\author[ENSTA]{Fr\'ed\'eric Jean}\ead{frederic.jean@ensta-paris.fr}

\address[UNIMELB]{Electrical and Electronical Engineering Department, University of Melbourne, Parkville VIC 3010, Melbourne, Australia}

\address[ONERA]{ONERA, Palaiseau, France}  
\address[ENSTA]{UMA, ENSTA Paris, Institut Polytechnique de Paris, Palaiseau, France} 

\begin{keyword}\rm
\begin{adjustwidth}{2cm}{2cm}{\itshape\textbf{Keyword:}}  

\begin{justify}
Optimal estimation,  Stochastic optimal control with imperfect information,
Near-optimal estimation, Particle Filtering, Explicit Dual control, Terrain-Aided navigation
\end{justify}
\end{adjustwidth}
\end{keyword}

\begin{abstract}\rm
\begin{adjustwidth}{2cm}{2cm}{\itshape\textbf{Abstract:}} 

\begin{justify}

This paper presents a joint optimisation framework for optimal estimation and stochastic optimal control with imperfect information. It provides a estimation and control scheme that can be decomposed into a classical optimal estimation step and an optimal control step where a new term coming from optimal estimation is added to the cost. It is shown that a specific particle filter algorithm allows one to solve the first step approximately in the case of Mean Square Error minimisation and under suitable assumptions on the model. Then, it is shown that the estimation-based control step can justify formally the use of Explicit dual controllers which are most of the time derived from empirical matters. Finally, a relevant example from Aerospace engineering is presented.

\end{justify}
\end{adjustwidth}
\end{abstract}
\end{frontmatter}
\section{Introduction}
Optimal estimation and control problems arise when one wants to reconstruct and monitor in an optimal way the state of a system using only partial information. Such problems are widespread, for instance, in chemical engineering \cite{kumar2012model}, in electrical and mechanical engineering \cite{bolton2003mechatronics}, in mathematical finance \cite{mamon2007hidden} and in aerospace engineering \cite{eren_model_2017}. In the presence of nonlinearities and disturbances, these problems are challenging for mainly two reasons. 

First, nonlinear estimation (or filtering) problems are known to be difficult on their own. While Kalman filters or optimisation-based estimators are cheap and perform well in the case of uni-modal uncertainty \cite{ristic2004beyond,rawlings2006particle}, they become inefficient when the conditional distribution is spread over several modes. A typical solution is to use particle filters, which are able to find these modes. As particle filters are Monte Carlo approximations of the optimal filter, there exists a zoo of theoretical convergence results. For example, in \cite{crisan_survey_2002}, almost sure and L2-convergence of particle filters are reviewed.  In \cite{del_moral_central_1999} and \cite{del_moral_monte-carlo_2002}, the authors show a Central Limit Theorem for classes of particle filters. In practice though, it is very common to assess the performance of a filter using the Mean Square Error (MSE). Intuitively, one would expect that, if the number of particles is sufficiently high, the MSE generated by a particle filter converges to the optimal MSE associated to the optimal filter. The MSE can also be a tool for assessing the stability a filter, see \cite{reif_stochastic_1999} and \cite{karvonen_stability_2014} for stability results of nonlinear Kalman filters. Consequently, showing that a particle filter reaches a quasi-optimal MSE might be a way to show its moment stability provided that another suboptimal filter is stable. Regrettably, the MSE involves integrals of unbounded functions w.r.t. the particle filter that are not handled by classical convergence results.   Thus, even though MSE's relevance can be discussed in multi-modal cases, MSE convergence results for particle filters lack in the literature. Besides, despite the very good performance of particle filtering in very nonlinear applications, only a few papers deals with including them in optimal control schemes, see for instance \cite{bayard_implicit_2008,sehr_particle_2016,flayac_nonlinear_2017,flayac_dual_2018}.     

Secondly, stochastic optimal control problems with imperfect information are more difficult than their full information counterpart. Actually, when only partial information is available, optimal controls have two roles. They must guide the system in a standard way and probe information to be aware of and improve the quality of the future information. It is called the \emph{dual effect} property of the control \cite{bar-shalom_dual_1974}. As optimal solutions are usually intractable, suboptimal control laws (also called dual controllers) are designed instead with the requirement to keep the property of dual effect. There are two main types of suboptimal dual controllers: implicit ones where one tries to approximate the Bellman equation usually by preserving the feedback structure on the information inside the optimal control problem (see \cite{bayard_implicit_2008,hanssen_scenario_2015,subramanian_non-conservative_2016}) and  explicit ones where an external excitation is introduced in the system to make it actively look for more information. The excitation can take the form of a constraint on the future information \cite{telen_study_2017} or of an additional term in the cost representing a loss of information. The latter way is called integrated experiment design. See \cite{mesbah_stochastic_2017} for a review in the Stochastic Model Predictive Control framework and \cite{unbehauen2000adaptive} for a survey in Adaptive Control. Information constraints may lead to infeasibility issues and are not very flexible. For this reason, in this article, we focus on Explicit dual control methods by integrated experiment design. Implicit methods are solidly formally grounded because they try to reproduce Bellman equation's behaviour. However, explicit ones are less well justified. In fact, the general link between the original optimal control problem and the new one is not clear since the modifications usually come from empirical considerations. The need for an external excitation that makes the control actively learn is generally assumed and justified in specific cases only.  

In other words, the dual effect property means that estimation and control must be coupled in their design. In particular, the separation principle cannot be applied for general nonlinear systems. Several attempts have been made to study coupled estimation and control in a general framework. In \cite{andrieu_unifying_2009}, a general formalism for joint nonlinear observer and control design is presented in a continuous-time deterministic framework. In \cite{copp_nonlinear_2014}, a min-max formalism for combined Moving Horizon Estimation and Model Predictive Control is presented but without imposing the dual effect property. To the best of our knowledge, there exists no joint formulation of the problem of optimal estimation and control in a general discrete time stochastic framework.

In this paper, an infinite-horizon multistage stochastic optimisation problem that gathers an optimal estimation problem and a stochastic optimal control problem  with imperfect information is proposed. After writing its Bellman equation, one can decompose the problem into two steps. The first step is a classical optimal estimation step and the second one is a modified stochastic optimal control step in which the optimal estimation error is added to the cost. These steps allow us to justify the use of particle filtering and Explicit dual control in an estimation/control scheme. In fact, we prove the near-optimality of the empirical mean of a specific particle filter in the case of MSE minimisation, with a rate of convergence. This means that the first step can be solved approximately by a particle filter under suitable assumptions on the model. Afterwards, we make a strong analogy between the modified control problem and integrated experiment design. The idea is that the additional empirical cost could be seen as a approximation of the optimal estimation error as it is itself a measure of information. Explicit dual control is then an practical solution of our second step.
Finally, we present an example coming from Aerospace engineering in which both particle filtering and Explicit dual control are very relevant. We also check that this application satisfies the assumptions of our near-optimality results.

The paper is structured as follows. Section \ref{sec_notations} gathers important notations used in the sequel. Section \ref{sec_elements_esti_cont} recalls some basics of optimal estimation and stochastic optimal control with imperfect information. Section \ref{sec_coupled_estimation_control} presents the coupling multistage program along with its analysis and re-decomposition into two steps. Section \ref{sec_particle_near_opti} contains the main convergence results concerning particle filtering and MSE minimisation. Section \ref{sec_modified_explicit} presents the link between the modified control problem and Explicit dual control. Finally, Section \ref{sec_example} describes how the example of Terrain-Aided Navigation fits the proposed framework.
\section{Notations}\label{sec_notations}
Let $(\Omega, \mathcal{F}, P)$ be a probability space. In the following, random variables refer to $\mathcal{F}$-measurable functions defined on $\Omega$. For $i\in \mathbb{N}^*$, $\mathcal{B}(\mathbb{R}^i)$ denotes the set of Borel sets of $\mathbb{R}^i$ and  $\mathcal{P}(\mathbb{R}^i)$ the set of probability distributions on $\mathbb{R}^i$. For a random variable $X$ and a probability distribution, $X\sim p$ means that $p$ is the probability law of $X$. $P(\cdot\vert\cdot)$ and $E(\cdot\vert\cdot)$ denotes the conditional probability and expectation. For $X$ and $Y$ two random variables valued respectively in $\mathbb{R}^i$ and $\mathbb{R}^j$ and $A \in \mathcal{B}(\mathbb{R}^j)$  , $P(Y\in A\vert X=x)$ is uniquely defined only for almost all $x$ in $\mathbb{R}^i$ considering the distribution of $X$. However, we will omit it several times in this paper when it is not important. In the sequel, a.a. is an abbreviation for almost all. For $\mu \in \mathcal{P}(\mathbb{R}^i)$, $f:\mathbb{R}^i \rightarrow \mathbb{R}^j $ $\mu$-integrable, and $A \in \mathcal{B}(\mathbb{R}^i)$, we denote the integral of $f$ w.r.t $\mu$ on $A$ by  $\int_A f(x)\mu(\mathrm{d}x)$. Integrals w.r.t the Lebesgue measure are denoted by $\int_A f(x)\mathrm{d}x$. When conciseness is required, we use a bracket notation for the integrals on the whole space, so that $\int_{\mathbb{R}^i} f(x)\mu(\mathrm{d}x)=\langle \mu,f\rangle$. For $x\in \mathbb{R}^i$, $\delta_x$ denotes the Dirac probability measure centered at $x$. Id stands for the identity fonctions on $ \mathbb{R}^i$. In the sequel, all the optimisation problems are assumed to have a solution, in particular the notation 'min' is used instead of 'inf'.
\section{Elements of stochastic estimation and control}\label{sec_elements_esti_cont}
\subsection{Setup}
We consider a discrete-time process $X={(X_k)}_{k \in \mathbb{N} }$ valued in $\mathbb{R}^{n_x}$ representing the state of a controlled stochastic dynamical system  described by the following equation:
\begin{align}
X_{k+1}&=f(X_{k},U_{k},{\xi}_{k}) \label{dyn_sto_gen}, \; \forall k \in \mathbb{N}\\
X_0&\sim p_0,\notag
\end{align}
where:
\begin{itemize}
\item $p_0$ is a probability law on $\mathbb{R}^{n_x}$;
\item 
${(U_k)}_{k \in \mathbb{N} }$ is the control process valued in $\mathcal{U}\subset\mathbb{R}^{n_u}$. $\mathcal{U}$ is the set of admissible control values;
\item ${(\xi_k)}_{k \in \mathbb{N}}$ are i.i.d. random variables valued in $\mathbb{R}^{n_{\xi}}$ distributed according to $p_\xi$. For each $ k \in \mathbb{N}$, $\xi_k$ represents an external disturbance on the dynamics;
\item $f$: $\mathbb{R}^{n_x}\times\mathbb{R}^{n_u}\times\mathbb{R}^{n_{\xi}} \longrightarrow \mathbb{R}^{n_x}$ is measurable.
\end{itemize}
In fact, equation \eqref{dyn_sto_gen} defines ${(X_k)}_{k \in \mathbb{N} }$ as a Markov Decision Process on $\mathbb{R}^{n_x}$. Its transition kernel, denoted by $K$, is assumed to has a density with respect to the Lebesgue measure meaning that for all $ A \in \mathcal{B}(\mathbb{R}^{n_x})$, for $x\in \mathbb{R}^{n_x}$ and  $u\in \mathbb{R}^{n_u}$,
\begin{align}
    P(X_{k+1}\in A \vert X_k=x,\; U_k=u)&= \int_A K(z,x,u)\mathrm{d}z\label{cond_proba_dyn},
\end{align}
where $K$: $\mathbb{R}^{n_x}\times\mathbb{R}^{n_x}\times\mathbb{R}^{n_u} \longrightarrow \mathbb{R}^+$ is measurable and satisfies for $x\in \mathbb{R}^{n_x}$ and  $u\in \mathbb{R}^{n_u}$, $\int_{\mathbb{R}^{n_x}} K(z,x,u)\mathrm{d}z=1$. Additionally, we assume that the state of the system is only available through some observations represented by a stochastic process ${Y=(Y_k)}_{k \in \mathbb{N}}$ valued in $\mathbb{R}^{n_y}$ which verifies for any $k \in \mathbb{N} $,
\begin{align}
Y_k=h(X_k,\eta_k)\label{obs_sto_gen},
\end{align}
where:
\begin{itemize}
\item ${(\eta_k)}_{k \in \mathbb{N} }$ are i.i.d. random variables valued in $\mathbb{R}^{n_{\eta}}$ distributed according to $p_\eta$; for each $ k \in \mathbb{N}$, $\eta_k$  represents an external disturbance on the observations. 
\item $h : \mathbb{R}^{n_x}\times\mathbb{R}^{n_{\eta}} \longrightarrow \mathbb{R}^{n_y}$ is measurable.
\end{itemize}

In the following, we assume that the conditional distribution defined by equation (\ref{obs_sto_gen}) has a density with respect to the Lebesgue measure such that there exists a likelihood function $\rho: \mathbb{R}^{n_y}\times \mathbb{R}^{n_x}\rightarrow \mathbb{R}^+$.    
Therefore, for $k \in \mathbb{N}$, for $x$ in $\mathbb{R}^{n_x}$ for $B \in \mathcal{B}(\mathbb{R}^{n_y})$ :
\begin{align*}
    P(Y_{k}\in B \vert X_k=x_k)&=\int_B \rho(y,x)\mathrm{d}y,
\end{align*}
with $\rho$ measurable and  $\int_{\mathbb{R}^{n_y}} \rho(y,x)\mathrm{d}y=1$. For $k \in \mathbb{N}$, we define the vector of \textit{available information} $I_k$ as follows:
\begin{align}
I_k&=(Y_0,U_0,\dots,Y_{k-1},U_{k-1},Y_k)\label{inf_vec}.
\end{align}

Note that $I_k$ represents all the \textit{values} that are available to compute an estimator of the state and a control. Another important quantity in this framework is the conditional distribution of $X_k$ given $I_k$, called the filtering distribution (or optimal filter) and denoted by $\mu_k$. It is of central importance in Bayesian filtering as it contains and weighs the possible values of the current state $X_k$ given only the value of $I_k$. It is known that  $\mu_k$ satisfies the nonlinear filtering equations which can be summed up as follows:
\begin{align}
    \mu_{k+1}&=F\left(\mu_k,Y_{k+1},U_k\right),\label{filtering_equation_red}
\end{align}
where $F$ : $\mathcal{P}(\mathbb{R}^{n_x})\times\mathbb{R}^{n_y}\times\mathbb{R}^{n_{u}} \longrightarrow \mathcal{P}(\mathbb{R}^{n_x})$ and $\mu_0$ is supposed to be known.


 The problem of estimation and control treated in this paper can be formulated as finding an estimator of the state, $\widehat{X}_k$ and a control $U_k$ as functions of $I_k$ such that:
\begin{align}
 \widehat{X}_k=\pi_k^e(I_k), \qquad
 U_k&=\pi_k^c(I_k),\label{control_policy_info_vec_imperf} 
\end{align}
where $\pi_k^e$: ${(\mathbb{R}^{n_y}\times\mathbb{R}^{n_u})}^{k}\times \mathbb{R}^{n_y} \longrightarrow \mathbb{R}^{n_x}$ and $\pi_k^c$: ${(\mathbb{R}^{n_y}\times\mathbb{R}^{n_u})}^{k}\times \mathbb{R}^{n_y} \longrightarrow \mathcal{U}\subset\mathbb{R}^{n_u}$ are measurable. Sequences of the form $\pi^e=(\pi_0^e,\dots,\pi_k^e,\dots)$ (resp. $\pi^c=(\pi_0^c,\dots,\pi_k^c,\dots)$) are called estimation (resp. control) policies.

Besides, it can be shown that $\mu_k$ carries as much information as $I_k$. More precisely, looking at $\mu_k$ as a random variable on $\mathcal{P}(\mathbb{R}^{n_x})$ equipped with the Borel $\sigma$-algebra for the weak topology, it is a sufficient statistics (see \cite{bertsekas_dynamic_2011,bertsekas_stochastic_2004}). It means that the estimation and control policies can also be looked for as functions of $\mu_k$ instead of $I_k$. Since equation \eqref{filtering_equation_red}  describes a time homogeneous Markov Chain on $\mathcal{P}(\mathbb{R}^{n_x})$ \cite{bertsekas_stochastic_2004,stettner_invariant_1989}, one can look for time-homogeneous functions such that, for $k\geq 0$:
\begin{align}
\widehat{X}_k=\pi_0^e\left(\mu_k\right),\qquad
 U_k=\pi_0^c\left(\mu_k\right),\label{control_policy_info_vec_imperf_inf_state_filter_time_homo}   \end{align}
where  $\pi_0^e$: $\mathcal{P}(\mathbb{R}^{n_x}) \longrightarrow \mathbb{R}^{n_x}$ and $\pi_0^c$: $\mathcal{P}(\mathbb{R}^{n_x}) \longrightarrow \mathcal{U}\subset\mathbb{R}^{n_u}$ are also measurable with the policies being $\pi^e=(\pi_0^e,\pi_0^e,\dots)$ and $\pi^c=(\pi_0^c,\pi_0^c,\dots)$.

The idea of the following is to look for $\pi^e$ and $\pi^c$ in a optimal way. With this in mind, the basics of optimal estimation and stochastic optimal control with imperfect information are recalled.
\subsection{Optimal estimation}\label{sec_optimal_estimation}

Classically, optimal estimation is concerned with finding an estimator of $X_k$ as a function of $I_k$ that minimises in average a general measure of the estimation error denoted by $g^e$. This leads to the following optimisation problem: 
\begin{align}
\begin{array}{rrclcc}
\displaystyle \underset{\pi^e_k}{\text{min}} & \multicolumn{3}{l}{E\left[  g^e(X_k,\widehat{X}_k)|I_k\right]} \\
\textrm{s.t.} & \widehat{X}_k  =  \pi^e_k({I}_k).
\end{array}
\label{optimal_estimation_gen_inf_vec}
\end{align}

Actually, the conditional expectation in the cost function from Problem \eqref{optimal_estimation_gen_inf_vec} can also be represented as an integral of $g^e$ w.r.t. the filtering distribution  $\mu_k$ and a new cost $\tilde{g}_e$ can be written as follows for any $\mu \in \mathcal{P}(\mathbb{R}^{n_x})$ and $\hat{x}\in \mathbb{R}^{n_x} $:
\begin{align*}
    \tilde{g}^e(\mu,\hat{x})&=\langle\mu,g^e(\cdot,\hat{x})\rangle,
\end{align*}
where $\langle\cdot,\cdot \rangle$ denotes the integral operator. Since $g^e$ does not depend explicitly on time, one obtains the following reformulation of Problem \eqref{optimal_estimation_gen_inf_vec} using a time homogeneous estimation policy of the form of  \eqref{control_policy_info_vec_imperf_inf_state_filter_time_homo}:
\begin{align}
\begin{array}{rrclcc}
\displaystyle \underset{\pi^e_k}{\text{min}} & \multicolumn{3}{l}{\tilde{g}^e(\mu_k,\widehat{X}_k)} \\
\textrm{s.t.} & \widehat{X}_k  =  \pi^e_0(\mu_k).
\end{array}
\label{optimal_estimation_gen_state_filter}
\end{align}

Problem \eqref{optimal_estimation_gen_inf_vec} and \eqref{optimal_estimation_gen_state_filter} have generally no analytical solutions except in a few cases including the case where $g^e(x,\hat{x})={\Vert\hat{x}-x\Vert}^2$ with ${\Vert \cdot \Vert}$ standing for the Euclidean norm on  $\mathbb{R}^{n_x}$.
 The latter problem is referred to as the  conditional Mean Square Error (MSE) minimisation problem and reads:
 
 \begin{align}
\begin{array}{rrclcc}
\displaystyle \underset{\pi^e_k}{\text{min}} & \multicolumn{3}{l}{E\left[  {\Vert \widehat{X}_k-{X}_k\Vert}^2|I_k\right]} \\
\textrm{s.t.} & \widehat{X}_k  =  \pi^e_k({I}_k).
\end{array}
\label{optimal_estimation_MSE_cond}
\end{align}

By simple calculations, one gets that the almost surely optimal estimator  w.r.t. the distribution of  $I_k$ is the  expectation of $X_k$ conditionally to $I_k$, defined in the following by $X_k^*:=E[X_k\vert I_k]$. Note that  $\widehat{X}_k^*$  can be rewritten as the integral of {Id} w.r.t. $\mu_k$ such that:
\begin{align}
     \widehat{X}_k^*=\langle\mu_k,\mathrm{Id} \rangle.
     \label{conditional_expectation_integral}
\end{align}

It is well known from \cite{anderson_optimal_1979} that $\widehat{X}_k^*$ is also the solution of the total MSE minimisation problem which reads:
\begin{align}
\begin{array}{rrclcc}
\displaystyle \underset{\pi^e}{\text{min}} & \multicolumn{3}{l}{E\left[  {\Vert\widehat{X}_k-X_k\Vert}^2\right]} \\
\textrm{s.t.} & \widehat{X}_k  =  \pi^e_k({I}_k).
\end{array}\label{optimal_estimation_total_MSE}
\end{align}

\subsection{Stochastic Optimal Control with imperfect information}\label{sec_optimal_control}

We consider classical time-homogeneous, infinite horizon stochastic optimal control problems with imperfect information. To do so, we define
 a time homogeneous instantaneous cost $g^c$: $\mathbb{R}^{n_x}\times\mathbb{R}^{n_u}\times\mathbb{R}^{n_{\xi}} \longrightarrow \mathbb{R}^+$ and a discount factor $\alpha \in )0,1]$. Using also the dynamics \eqref{dyn_sto_gen}, the observation equation \eqref{cond_proba_dyn} and the information dynamics \eqref{inf_vec}, one gets the following problem in the information vector space, for any $i_0$:
 
 \begin{equation}
\begin{array}{rrclcc}
V_0(i_0)=\displaystyle \underset{\pi_{k}^c}{\text{min}} & \multicolumn{3}{l}{E\left[\sum_{k=0}^{+\infty} \alpha^k g^c(X_k,U_k,\xi_k)| I_0 = i_0\right]} \\
\textrm{s.t.} & X_{k+1} & = & f(X_{k},U_{k},{\xi}_{k}), \\
& Y_k & = & h(X_k,\eta_k),\\
&I_{k+1}&=&(I_k,U_{k},Y_{k+1}),\\
& U_k & = & \pi_k^c(I_k), \; \forall k\geq0.
\end{array}
\label{sto_opt_control_imperfect_inf_infinite_horizon}
\end{equation}

Using the Markov structure of ${(\mu_k)}_{k\geq0}$ mentioned previously, one can reformulate Problem \eqref{sto_opt_control_imperfect_inf_infinite_horizon} as a perfect information one on $\mathcal{P}(\mathbb{R}^{n_x})$ with the new state being $\mu_k$. Additionally, as $g^c$ does not depend explicitly on time, similarly to $g^e$, one can use policies of the form \eqref{control_policy_info_vec_imperf_inf_state_filter_time_homo}. The resulting problem can be written compactly as follows,  for any $\mu \in \mathcal{P}(\mathbb{R}^{n_x})$:
\begin{equation}
\begin{array}{rrclcc}
V(\mu)=\displaystyle \underset{\pi_{0}^c}{\text{min}} & \multicolumn{3}{l}{E\left[\sum_{k=0}^{+\infty} \alpha^k\tilde{g}^c(\mu_k,U_k)| \mu_0 = \mu\right]} \\
\textrm{s.t.} &\mu_{k+1}&=&F\left(\mu_k,Y_{k+1},U_k\right),\\
& U_k & = & \pi_0^c(\mu_k), \; \forall k\geq 0,
\end{array}
\label{sto_opt_control_imperfect_inf_infinite_horizon_state_filter}
\end{equation}
with $\tilde{g}^c(\mu_k,u)=\langle\mu_k,c(\cdot,u) \rangle$ where $c$ depends on $g^c$ and on the conditional distribution of $\xi_k$ knowing $X_k$.
As Problem \eqref{sto_opt_control_imperfect_inf_infinite_horizon_state_filter} is a perfect information one, the Dynamics Programming (DP) Principle can be applied (see  \cite{bertsekas_stochastic_2004} for the details) and one obtains the following Bellman equation,  for any $\mu \in \mathcal{P}(\mathbb{R}^{n_x})$:
\begin{align}
\begin{array}{rrclcc}
V(\mu)=\displaystyle \underset{u\in \mathcal{U}}{\text{min}} & \multicolumn{3}{l}{E\left[ \tilde{g}^c(\mu,u) + \alpha V(\mu_{\ell+1})| \mu_\ell = \mu\right]} \\
\textrm{s.t.} &\mu_{\ell+1}&=&F\left(\mu_\ell,Y_{\ell+1},u\right).
\end{array}
\label{sto_dp_imperfect_inf_infinite_horizon_state_filter}
\end{align}

It can be seen from the DP Principle  \eqref{sto_dp_imperfect_inf_infinite_horizon_state_filter} that any optimal policy, denoted by $\pi^c_*$, exhibits implicit dual effect. It means that the controls influence the future available information and thus future state estimation. The term 'implicit' refers to the fact that, in the case of optimal policies, the dual effect comes from optimality and not from an external excitation. In this sense, control and state estimation cannot be separated and must be coupled in their design. In the sequel, we formalise this idea in a joint optimisation framework. 

\section{Coupled optimal estimation and control}\label{sec_coupled_estimation_control}

\subsection{Formalisation of the optimisation problem}

In this section, we consider the system \eqref{dyn_sto_gen} with the observation equation \eqref{obs_sto_gen} and focus on the infinite horizon case. The idea is to add an estimator as a variable in \eqref{sto_opt_control_imperfect_inf_infinite_horizon} in order to mix Problem \eqref{optimal_estimation_gen_inf_vec} and \eqref{sto_opt_control_imperfect_inf_infinite_horizon}. To do so we consider an augmented control $W_k=(U_k,\widehat{X}_k)$ and the corresponding augmented information vector $\tilde{I}_k$ defined recursively as follows:
\begin{align}
    \tilde{I}_{0}=Y_0,\qquad
    \tilde{I}_{k+1}=(\tilde{I}_k,W_{k},Y_{k+1}).\label{dyn_info_augmented}
\end{align}
In fact, $W_k$ is chosen as a function of $\tilde{I}_k$ as in equation \eqref{control_policy_info_vec_imperf} such that for $k\geq0$:
\begin{align*}
  U_k =\pi_k^c(\tilde{I}_k), \qquad
 \widehat{X}_k =\pi_k^e(\tilde{I}_k),\qquad
 \pi_k^{aug}=(\pi_k^c,\pi_k^e),
 \end{align*}
 \begin{align*}
 W_k=(U_k,\widehat{X}_k)=\pi_0^{aug}(I_k).
\end{align*}
 We also define the new augmented cost function $g^{aug}$ and dynamics $f^{aug}$ in the following way, for $x\in \mathbb{R}^{n_x}$,  $w=(u,\hat{x})\in \mathcal{U}\times\mathbb{R}^{n_x}$, $\xi\in \mathbb{R}^{n_\xi}$:
\begin{align}
    g^{aug}(x,w,\xi)&=g^c(x,u,\xi)+g^e(x,\hat{x}),\label{cost_control+estimation}\\
    f^{aug}(x,w,\xi)&=f(x,u,\xi),\label{dyn_sto_augmented}
\end{align}
where $g^e$: $\mathbb{R}^{n_x}\times\mathbb{R}^{n_x}\longrightarrow \mathbb{R}^+$ is a measure of the estimation error as in Problem \eqref{optimal_estimation_gen_inf_vec}
 and $g^c$: $\mathbb{R}^{n_x}\times\mathbb{R}^{n_u}\times\mathbb{R}^{n_\xi}\longrightarrow \mathbb{R}^+$ is the cost function of a classical stochastic optimal control of the form \eqref{sto_opt_control_imperfect_inf_infinite_horizon}.
Note that the augmented dynamics $f^{aug}$ does not depend on $\hat{x}$.
The main assumption in \eqref{cost_control+estimation} is that the total cost $g^{aug}$ can be separated as a sum of a control-oriented term and an estimation-oriented term. It is a mild assumption as one can consider that the true underlying estimation and control problem would be a bi-objective one with $g^c$ and $g^e$ being the two cost functions. Intuitively, $g^c$ and $g^e$ are often anti-correlated because one often needs to trade some control performance for a better estimation. See \cite{unbehauen2000adaptive} for an example in adaptive control.
With this in mind, equation \eqref{cost_control+estimation} can be seen as a trade-off  coming from the conversion of a bi-objective problem into a mono-objective one. 
The last remarks lead to the following optimisation problem for $\tilde{i}_0\in\mathbb{R}^{n_y}$:

\begin{equation}
\begin{array}{rrclcc}
\widetilde{V}(\tilde{i}_0)=\displaystyle \underset{\pi_{0}^c,\pi_{0}^e}{\text{min}} & \multicolumn{3}{l}{E^{\pi}_{p_0}\left[\sum_{k=0}^{+\infty} \alpha^k g^{aug}(X_k,W_k,\xi_k)| \tilde{I}_0 = \tilde{i}_0\right]} \\
\textrm{s.t.} & X_{k+1} & = & f^{aug}(X_{k},W_{k},{\xi}_{k}), \\
 &Y_k  &=&  h(X_k,\eta_k),\\
&\tilde{I}_{k+1}&=&(\tilde{I}_k,W_{k},Y_{k+1}),\\
& W_k & = & (\pi_k^c(\tilde{I}_k),\pi_k^e(\tilde{I}_k)), \; \forall k\geq0.
\end{array}
\label{sto_control+estimation_imperfect_inf_infinite_horizon}
\end{equation}

Problem \eqref{sto_control+estimation_imperfect_inf_infinite_horizon} combines Problems \eqref{optimal_estimation_gen_inf_vec} and \eqref{sto_opt_control_imperfect_inf_infinite_horizon} in one multistage optimisation problem. The study of this formulation has been started in \cite{flayac_nonlinear_2017}. It is actually inspired from \cite{copp_nonlinear_2014} in which a similar gathering is done in a min-max optimisation framework. We would like to write the DP principle for the problem \eqref{sto_control+estimation_imperfect_inf_infinite_horizon} to make explicit links with classical optimal control and optimal estimation. As we did for the problem \eqref{sto_opt_control_imperfect_inf_infinite_horizon}, we can rewrite the problem \eqref{sto_control+estimation_imperfect_inf_infinite_horizon} in terms of the conditional distribution of $X_k$ knowing $\tilde{I}_k$, denoted by $\tilde{\mu}_k$. As for $\mu_k$, one can derive the dynamics of $\tilde{\mu}_k$ from equations \eqref{obs_sto_gen} and \eqref{dyn_sto_augmented}:
\begin{align}
    \tilde{\mu}_{k+1}&=F^{aug}\left(\tilde{\mu}_k,Y_{k+1},W_k\right),\label{filtering_equation_red_augmented}
\end{align}
with  $F^{aug}$ : $\mathcal{P}(\mathbb{R}^{n_x})\times\mathbb{R}^{n_y}\times\mathbb{R}^{n_{u}} \longrightarrow \mathcal{P}(\mathbb{R}^{n_x})$ leading to:
\begin{align}
\begin{array}{rrclcc}
\widetilde{V}(\mu)=\displaystyle \underset{\pi_{0}^{aug}}{\text{min}} & \multicolumn{3}{l}{E\left[\sum_{k=0}^{+\infty}\alpha^k \tilde{g}^{aug}(\tilde{\mu}_k,W_k)| \tilde{\mu}_0 = \mu\right]} \\
\textrm{s.t.} &\tilde{\mu}_{k+1}&=&F^{aug}\left(\tilde{\mu}_k,Y_{k+1},W_k\right),\\
& W_k & = & \pi_{0}^{aug}(\tilde{\mu}_k),\;\forall k\geq 0,\\
\end{array}
\label{sto_control+estimation_imperfect_inf_infinite_horizon_state_filter}
\end{align}
where $\tilde{g}^{aug}=\tilde{g}_c+\tilde{g}_e$ with $\tilde{g}_c$ and $\tilde{g}_c$ defined as in Sections \ref{sec_optimal_estimation} and \ref{sec_optimal_control}.

One would like to use the structure of the cost to separate the problem of control and estimation. In this sense, the formulation \eqref{sto_control+estimation_imperfect_inf_infinite_horizon_state_filter} is not practical because it involves $\tilde{\mu}_k$ which depends on the estimator whereas $f^{aug}$ actually does not. To split the two problems back, we start by noticing that the filtering equation of the augmented system and of the original one are the same meaning that:
\begin{align*}
   F^{aug}\left(\tilde{\mu}_k,Y_{k+1},W_k\right)=F\left(\mu_k,Y_{k+1},U_k\right).
\end{align*}
Note that $\tilde{\mu}_0=\mu_0$. This leads, by recursion on $k$, to $\tilde{\mu}_k=\mu_k$ almost surely. 
Finally, we can write our coupled control and estimation problem as a perfect information problem with $\mu_k$ as the state: 
\begin{align}
\begin{array}{rrclcc}
\widetilde{V}(\mu)=\displaystyle \underset{\pi_{0}^c,\pi_{0}^e }{\text{min}} & \multicolumn{3}{l}{E^{\pi}\left[\sum_{k=0}^{+\infty}\alpha^k( \tilde{g}^c(\mu_k,U_k)+\tilde{g}^e(\mu_k,\widehat{X}_k))| \mu_0 = \mu\right]} \\
\textrm{s.t.} &\mu_{k+1}&=&F\left(\mu_k,Y_{k+1},U_k\right),\\
& U_k & = & \pi_0^c(\mu_k), \\
& \widehat{X}_k & = & \pi_0^e(\mu_k), \; \forall k\geq 0.\\
\end{array}
\label{sto_control+estimation_imperfect_inf_infinite_horizon_state_filter_detail}
\end{align}

\subsection{Main result}\label{sec_dp_control+estimation}

Theorem \ref{thm:optimal_policies_coupled} shows that the augmented optimal policies of Problem \eqref{sto_control+estimation_imperfect_inf_infinite_horizon_state_filter_detail} can be decomposed into the solution of an optimal estimation problem and the solution of a optimal control problem on  $\mathcal{P}(\mathbb{R}^{n_x})$ whose cost depends on the optimal value of the estimation problem.  

\begin{theorem}\label{thm:optimal_policies_coupled}
For any $\mu \in \mathcal{P}(\mathbb{R}^{n_x})$, $\pi_{aug}^*=(\pi_e^*,\pi_c^*)$ is a solution of Problem \eqref{sto_control+estimation_imperfect_inf_infinite_horizon_state_filter_detail} if and only if $\pi_e^*$ is a solution of the following optimal estimation problem:

\begin{align*}
    \begin{array}{rrclcc}
\displaystyle \underset{\pi^e_k}{\emph{min}} & \multicolumn{3}{l}{\tilde{g}^e(\mu_k,\widehat{X}_k)} \\
\textrm{s.t.} & \widehat{X}_k  =  \pi^e_0(\mu_k),
\end{array}
\end{align*}

and $\pi_c^*$ is a solution of the following stochastic optimal problem on the space of probability measures:

  \begin{align}
\begin{array}{rrclcc}
\widetilde{V}(\mu)=\displaystyle \underset{\pi_{0}^c }{\text{min}} & \multicolumn{3}{l}{E^{\pi}\left[\sum_{k=0}^{+\infty}\alpha^k( \tilde{g}^c(\mu_k,U_k)+\tilde{g}^e_*(\mu_k))| \mu_0 = \mu\right]} \\
\textrm{s.t.} &\mu_{k+1}&=&F\left(\mu_k,Y_{k+1},U_k\right),\\
& U_k & = & \pi_0^c(\mu_k), \; \forall k\geq 0.\\
\end{array}
\label{sto_control_imperfect_inf_infinite_horizon_state_filter_estimation_cost}
\end{align}
Aditionally, the value function of PORblem
\end{theorem}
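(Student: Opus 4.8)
The plan is to exploit two facts already in place in this section: the cost $\tilde{g}^{aug}$ splits additively as $\tilde{g}^c+\tilde{g}^e$, and the filtering distribution satisfies $\tilde{\mu}_k=\mu_k$ almost surely, so that the controlled chain $(\mu_k)_{k\ge 0}$ driving Problem \eqref{sto_control+estimation_imperfect_inf_infinite_horizon_state_filter_detail} depends on the augmented policy only through the control part $\pi_0^c$, and never through the estimator $\pi_0^e$. First I would fix an arbitrary control policy $\pi_0^c$ and split the objective of \eqref{sto_control+estimation_imperfect_inf_infinite_horizon_state_filter_detail} as
\[
E^{\pi}\Big[\sum_{k=0}^{+\infty}\alpha^k\tilde{g}^c(\mu_k,U_k)\,\Big|\,\mu_0=\mu\Big]+E^{\pi}\Big[\sum_{k=0}^{+\infty}\alpha^k\tilde{g}^e(\mu_k,\widehat{X}_k)\,\Big|\,\mu_0=\mu\Big],
\]
where the first term does not involve $\pi_0^e$ at all and, in the second, $\widehat{X}_k=\pi_0^e(\mu_k)$ enters only through $\tilde{g}^e(\mu_k,\cdot)$.

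Next I would minimise the second term over $\pi_0^e$. Writing $\tilde{g}^e_*(\mu):=\min_{\hat{x}\in\mathbb{R}^{n_x}}\tilde{g}^e(\mu,\hat{x})$ — precisely the optimal value of the estimation problem in the statement — one has for every admissible $\pi_0^e$ and every $k$ the pointwise bound $\tilde{g}^e(\mu_k,\pi_0^e(\mu_k))\ge\tilde{g}^e_*(\mu_k)\ge 0$. Since all integrands are non-negative, Tonelli's theorem allows summing and taking the conditional expectation term by term, so
\[
E^{\pi}\Big[\sum_{k=0}^{+\infty}\alpha^k\tilde{g}^e(\mu_k,\widehat{X}_k)\,\Big|\,\mu_0=\mu\Big]\ \ge\ E^{\pi}\Big[\sum_{k=0}^{+\infty}\alpha^k\tilde{g}^e_*(\mu_k)\,\Big|\,\mu_0=\mu\Big],
\]
with equality exactly when $\pi_0^e(\mu)$ is, for a.a.\ $\mu$, a minimiser of $\tilde{g}^e(\mu,\cdot)$, which is admissible by the standing assumption that all optimisation problems have solutions (invoking a measurable-selection theorem to secure measurability in $\mu$ if needed). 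Crucially this lower bound is independent of $\pi_0^c$, so $\pi_e^*$ is optimal for the coupled problem if and only if it realises this pointwise minimum, i.e.\ if and only if it solves the estimation problem of the theorem — the ``$\pi_e^*$'' half of the equivalence. Substituting $\tilde{g}^e_*$ back, the residual minimisation over $\pi_0^c$ is literally Problem \eqref{sto_control_imperfect_inf_infinite_horizon_state_filter_estimation_cost}, hence $\pi_c^*$ is optimal for the coupled problem if and only if it solves \eqref{sto_control_imperfect_inf_infinite_horizon_state_filter_estimation_cost}; combining the two halves gives the full ``if and only if'', and the same chain of equalities shows that the optimal value of \eqref{sto_control+estimation_imperfect_inf_infinite_horizon_state_filter_detail} equals $\widetilde{V}(\mu)$ of \eqref{sto_control_imperfect_inf_infinite_horizon_state_filter_estimation_cost}, which is the final assertion.

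The only genuinely delicate points are the measurability of $\mu\mapsto$ (a minimiser of $\tilde{g}^e(\mu,\cdot)$) on $\mathcal{P}(\mathbb{R}^{n_x})$ with its weak topology — needed so that the pointwise-optimal $\pi_e^*$ is an admissible estimation policy — and the legitimacy of interchanging the minimisation over $\pi_0^e$ with the infinite sum and the conditional expectation. I expect the measurability issue to be the main obstacle: it is dispatched either by the paper's blanket assumption that the optimisation problems have solutions or, more carefully, by a measurable-selection theorem (e.g.\ Kuratowski--Ryll-Nardzewski) once one checks suitable joint-measurability and lower-semicontinuity of $\tilde{g}^e$. The $\min$/$\sum$/$E$ interchange is painless because $g^c,g^e\ge 0$ makes it monotone, and everything else is bookkeeping resting on the identity $\tilde{\mu}_k=\mu_k$.
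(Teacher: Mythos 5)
Your argument is correct in substance, but it follows a different route from the paper. The paper never argues directly on the infinite-horizon objective: it writes the Bellman equation of the coupled problem on $\mathcal{P}(\mathbb{R}^{n_x})$, observes that $\tilde{g}^c(\mu,u)$ and $\tilde{g}^e(\mu,\hat{x})$ are deterministic given $\mu$ and that $F$ does not depend on $\hat{x}$, pulls the minimisation over $\hat{x}$ inside as a static minimisation producing $\tilde{g}^e_*(\mu)$, and then recognises the reduced equation as the Bellman equation of the modified control problem \eqref{sto_control_imperfect_inf_infinite_horizon_state_filter_estimation_cost}, from which the decomposition of optimal policies and the identification of $\widetilde{V}$ follow. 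You instead split the multistage cost additively, use $\tilde{\mu}_k=\mu_k$ and the fact that the law of $(\mu_k)$ depends only on $\pi_0^c$, bound the estimation term below by $E^{\pi}[\sum_k\alpha^k\tilde{g}^e_*(\mu_k)\mid\mu_0=\mu]$ via Tonelli, and characterise equality by a pointwise (a.e.) minimiser. Both proofs hinge on the same two structural facts, but your direct argument avoids invoking the validity of the DP principle for the coupled problem on the space of measures and makes the almost-everywhere caveat in the ``only if'' direction explicit, whereas the paper's DP route yields immediately the interpretation of $\widetilde{V}$ as the value function used in its two-step scheme. One sentence of yours should be repaired: the lower bound $E^{\pi}[\sum_k\alpha^k\tilde{g}^e_*(\mu_k)\mid\mu_0=\mu]$ is \emph{not} independent of $\pi_0^c$ (it depends on the law of $(\mu_k)$ induced by the control); what is independent of $\pi_0^c$ is the pointwise minimiser $\pi_e^*$ attaining it, and that is exactly what lets the joint minimisation decompose, so the conclusion stands once the sentence is corrected. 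The measurable-selection remark is appropriate and consistent with the paper's blanket assumption that minimisers exist.
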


From \eqref{sto_control+estimation_imperfect_inf_infinite_horizon_state_filter_detail}, the Bellman equation of the coupled problem reads:
\begin{align}
&\begin{array}{rrclcc}
\widetilde{V}(\mu)=\displaystyle \underset{(u,\hat{x})\in \mathcal{U}\times\mathbb{R}^{n_x}}{\text{min}} & \multicolumn{3}{l}{E\left[ \tilde{g}^c(\mu,u) +\tilde{g}^e(\mu,\hat{x}) + \alpha \widetilde{V}(\mu_{\ell+1})| \mu_\ell = \mu\right]} \\
\textrm{s.t.} &\mu_{\ell+1}=F\left(\mu_\ell,Y_{\ell+1},u\right),
\end{array}
\notag\\
\intertext{As $\tilde{g}^c(\mu,u)$ and $\tilde{g}^e(\mu,\hat{x})$ are deterministic and $F$ does not depend on $\hat{x}$:}
&\begin{array}{rrclcc}
\widetilde{V}(\mu)=\displaystyle \underset{u \in \mathcal{U}}{\text{min}} & \multicolumn{3}{l}{  \left(\underset{\hat{x}\in \mathbb{R}^{n_x}}{\text{min}}\tilde{g}^e(\mu,\hat{x})\right) +\tilde{g}^c(\mu,u) + \alpha E\left[\widetilde{V}(\mu_{\ell+1})| \mu_\ell = \mu\right]} \\
\textrm{s.t.} &\mu_{\ell+1}=F\left(\mu_\ell,Y_{\ell+1},u\right).
\end{array}
\label{sto_dp_control+estimation_infinite_horizon_state_filter}
\end{align}
Equation  \eqref{sto_dp_control+estimation_infinite_horizon_state_filter} illustrates the fact that Problem \eqref{sto_control+estimation_imperfect_inf_infinite_horizon}, which gathers optimal control and optimal estimation, can actually be split back into a hierarchy of two problems. Indeed, it justifies the use of a resolution scheme in two steps:
\begin{enumerate}
    \item First, one solves the inner minimisation in Problem \eqref{sto_dp_control+estimation_infinite_horizon_state_filter} which is a classical optimal problem of the form \eqref{optimal_estimation_gen_state_filter}. Any of its solution gives a time invariant optimal estimation policy denoted by $\pi^e_*(\mu)$. From this, we set:
    \begin{align}
        \tilde{g}^e_*(\mu):=\tilde{g}^e(\mu,\pi^e_*(\mu))= \underset{\hat{x}\in \mathbb{R}^{n_x}}{\text{min}}\tilde{g}^e(\mu,\hat{x}) \label{inner_optimal_cost_star}
    \end{align}
   \label{optimal_estimation_step}
    \item Secondly, by substituting \eqref{inner_optimal_cost_star} in \eqref{sto_dp_control+estimation_infinite_horizon_state_filter} one gets:
    \begin{align}
    &\begin{array}{rrclcc}
        \widetilde{V}(\mu)=\displaystyle \underset{u \in \mathcal{U}}{\text{min}} & \multicolumn{3}{l}{  \tilde{g}^e_*(\mu) +\tilde{g}^c(\mu,u) + \alpha E\left[\widetilde{V}(\mu_{\ell+1})| \mu_\ell = \mu\right]} \\
        \textrm{s.t.} &\mu_{\ell+1}=F\left(\mu_\ell,Y_{\ell+1},u\right).
    \end{array}
    \label{sto_dp_control_infinite_horizon_state_filter_estimation_cost}
    \end{align}
    Equation \eqref{sto_dp_control_infinite_horizon_state_filter_estimation_cost} can be interpreted as the Bellman equation of a stochastic optimal control problem on $\mathcal{P}(\mathbb{R}^{n_x})$ which has the same optimal value as Problem \eqref{sto_control+estimation_imperfect_inf_infinite_horizon_state_filter}. The second step is then to solve this problem:
    \begin{align}
\begin{array}{rrclcc}
\widetilde{V}(\mu)=\displaystyle \underset{\pi_{0}^c }{\text{min}} & \multicolumn{3}{l}{E^{\pi}\left[\sum_{k=0}^{+\infty}\alpha^k( \tilde{g}^c(\mu_k,U_k)+\tilde{g}^e_*(\mu_k))| \mu_0 = \mu\right]} \\
\textrm{s.t.} &\mu_{k+1}&=&F\left(\mu_k,Y_{k+1},U_k\right),\\
& U_k & = & \pi_0^c(\mu_k), \; \forall k\geq 0.\\
\end{array}
\label{sto_control_imperfect_inf_infinite_horizon_state_filter_estimation_cost}
\end{align}
   Note that $\tilde{g}^e_*(\mu)=\langle\mu,g^e(\cdot,\pi^e_*(\mu)) \rangle$ so $\tilde{g}^e_*$ is generally nonlinear in $\mu$ and is not an integral w.r.t. $\mu$ as in \eqref{sto_opt_control_imperfect_inf_infinite_horizon_state_filter}. Therefore, Problem \eqref{sto_control_imperfect_inf_infinite_horizon_state_filter_estimation_cost} cannot be written back in the form of Problem \eqref{sto_opt_control_imperfect_inf_infinite_horizon}. \label{optimal_control_estimation_cost_step}
\end{enumerate}
At first sight, this split scheme looks like any classical one because most of the outputfeedback controllers are built from an estimation and a control step. However, it is different from a classical scheme in several ways. First, the two hierarchical steps appears naturally from the coupled problem \eqref{sto_control+estimation_imperfect_inf_infinite_horizon} meaning that the splitting is structural in this case and does not come from an assumption of separation. Secondly, the actual value of $\widehat{X}_k$ is not directly involved in the control problem \eqref{sto_control_imperfect_inf_infinite_horizon_state_filter_estimation_cost} but only $\mu_k$. In practice however, the same approximation of $\mu_k$ is generally used both in the estimation and the control step. Finally, the Problem \eqref{sto_control_imperfect_inf_infinite_horizon_state_filter_estimation_cost} has new interesting properties that will be the topic of Section \ref{sec_modified_explicit}.
The goal of the next two sections is to give a practical resolution of the scheme described in this section by means of a particle filtering algorithm for Step \ref{optimal_estimation_step} and of a general  Explicit dual control scheme for Step \ref{optimal_control_estimation_cost_step}.

\section{Particle filtering and  near-optimal estimation}\label{sec_particle_near_opti}
In equation \eqref{control_policy_info_vec_imperf_inf_state_filter_time_homo}, the posterior distribution $\mu_k$ is implicitly assumed known when $I_k$ is. However, for a general nonlinear case, $\mu_k$ does not have an analytical form and approximations must be carried out. Kalman filters are widespread and easy-to-compute approximations of $\mu_k$ but they may fail in the presence of high nonlinearities and multimodality. Besides, nonlinear Kalman filters are not optimal even for problem \eqref{optimal_estimation_MSE_cond} and their suboptimality is generally impossible to quantify. That is why, in the sequel, we focus on particle filtering algorithms to approach $\mu_k$. Then, we show that the empirical mean of a specific particle filter is near-optimal for Problem \eqref{optimal_estimation_MSE_cond} and a rate of convergence is provided.
\subsection{Particle filtering}

A particle filter approximates the posterior distribution  $\mu_k$  by a set of $N\geq1$ particles, ${\left(x^{i}_{k}\right)}_{i= 1,..,N }$ valued in $\mathbb{R}^{n_x}$, associated with nonnegative and normalized weights ${\left({\omega}^{i}_{k}\right)}_{i=1,..,N }$. This approximation is denoted by $\mu_k^N$. The same can be done with the predicted distribution $\mu_{k|k-1}$ with similar notations:
 \begin{align}
      \mu_k^N=\sum_{i=1}^{N}\omega_k^{i}\delta_{x^{i}_k},\qquad
       \mu_{k|k-1}^N=\sum_{i=1}^{N}\omega_{k|k-1}^{i}\delta_{x^{i}_{k|k-1}}\label{particle_filter}.
\end{align}
 As for Kalman filters, a particle filter is computed recursively following two steps: prediction and correction. During the prediction step, the particles are propagated using an importance distribution that is often chosen as the Markov kernel $K$ from the dynamics. During the correction step, the weights are updated thanks to the last observation and the particles are resampled from the updated weights. In the sequel, we consider a particular algorithm coming from \cite{hu_general_2011} where an intermediary step of selection of the particles according to their likelihood is added. Moreover, we define the empirical mean of the filter $\mu_k^N$, denoted by $\widehat{X}_k^N$ as follows:
\begin{align}
    \widehat{X}_k^N &=\sum_{i=1}^{N}\omega_k^{i}x_k^{i}=\langle\mu_k^N,\mathrm{Id} \rangle.
    \label{empirical_mean_integral}
 \end{align}
\subsection{Near-optimal estimation}
\subsubsection{Statement of the problem}
In this section, we consider a fixed vector of information $i_k$. Thus, $\mu_k$ is the distribution of $X_k$ conditionally to $I_k=i_k$ and $\mu_{k|k-1}$ is the distribution of $X_k$ conditionally to $I_{k-1}=i_{k-1}$.
In Section \ref{sec_coupled_estimation_control}, we have seen that if we model the problem of control and estimation as an optimisation problem then a step of optimal estimation is required at each time $k$. We focus on the Mean Square Error minimisation problem i.e. we assume that  $g^e(x,\hat{x})={\Vert \hat{x}-x\Vert}^2$.  We recall the problem seen in Section \ref{sec_optimal_estimation}:
\begin{align}
&\underset{\hat{x}\in \mathbb{R}^{n_x}}{\text{min}}E[{\Vert \hat{x}-X_k\Vert}^2|I_k=i_k]. \label{inner_optimal_estimation_MSE}
    \end{align}

 We are concerned with MSE because it is a very popular estimation error measure of particle filters in practice but it has not really been studied theoretically. Besides, in this section, we study  the conditional MSE minimisation problem \eqref{optimal_estimation_MSE_cond} precisely because this problem appears in Step \ref{optimal_estimation_step} in Section \ref{sec_coupled_estimation_control}. However, we are also interested in the minimisation of the total MSE. In fact, It gives a more useful assessment of the filter than the conditional MSE because it does not depend on $i_k$ which is unknown at the initial time.

From this, we define the conditional and total optimal MSE at time $k$ denoted respectively by $e^{cond}_{k,*}$ and $e^{tot}_{k,*}$, as follows, for any $i_k$:
\begin{align*}
   e^{cond}_{k,*}(i_k):= E\left[{\Vert X_k - \widehat{X}_k^*\Vert}^2\vert I_k=i_k\right],\qquad
   e^{tot}_{k,*}:= E\left[{\Vert X_k - \widehat{X}_k^*\Vert}^2\right].
\end{align*}

Similarly to the optimal MSE, we define the empirical conditional and total MSE associated with $\widehat{X}_k^N$ denoted respectively by $e^{cond}_{k,N}$ and $e^{tot}_{k,N}$, as follows, for any $i_k$:
\begin{align*}
   e^{cond}_{k,N}(i_k):= E\left[{\Vert X_k - \widehat{X}_k^N\Vert}^2\vert I_k=i_k\right],\qquad
   e^{tot}_{k,N}:= E\left[{\Vert X_k - \widehat{X}_k^N\Vert}^2\right],
\end{align*}
where the expectation is also taken over the randomness of the particles.

The main contribution of the sequel, gathered in Theorems \ref{prop_convergence_cond_MSE} and \ref{prop_bound_total_mse} is to show that under suitable assumptions on the dynamics \eqref{dyn_sto_gen}, the observation equation \eqref{obs_sto_gen} and the particle filter \eqref{particle_filter}, the empirical MSE converges to the optimal MSE as the number of particles goes to infinity. More precisely, we prove  error bounds between  $e^{cond}_{k,N}(i_k)$ and $ e^{cond}_{k,*}(i_k)$ and between  $e^{tot}_{k,N}$ and $e^{tot}_{k,*}$.

\subsubsection{Error bounds between the optimal MSE and the empirical MSE} \label{sec_error_bound_cond_MSE}
The main difficulty in the following comes from the fact that, even if $\widehat{X}_k^N$ is very commonly used as an approximation of $\widehat{X}_k^*$, estimating rigorously the  convergence of $\widehat{X}_k^N$ to $\widehat{X}_k^*$ cannot be achieved by classical error bounds on particle filters. In fact, from equations \eqref{conditional_expectation_integral} and \eqref{empirical_mean_integral}, $\widehat{X}_k^*$ and $\widehat{X}_k^N$ are the integral of Id, which is unbounded,  w.r.t. $\mu_k$ and $\mu_k^N$. Therefore, it does not fit in the classical framework of weak error bounds. 

To begin with, without assumptions, we can compare the several MSE  using the optimality of $e^{cond}_{k,*}$ and $e^{tot}_{k,*}$. This is the topic of Lemma \ref{prop_optimality_MSE} which is a direct consequence of the optimality  of $\widehat{X}_k^*$.

\begin{lemma}\label{prop_optimality_MSE}
The following inequalities hold, for any $k\geq0$ and any $i_k$:
\begin{align*}
    e^{cond}_{k,*}(i_k)\leq e^{cond}_{k,N}(i_k), \qquad
    e^{tot}_{k,*}\leq e^{tot}_{k,N}.\\
\end{align*}
\end{lemma}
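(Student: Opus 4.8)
The plan is to exploit directly the optimality of the conditional-mean estimator $\widehat{X}_k^*=E[X_k\vert I_k]$ established in Section \ref{sec_optimal_estimation}. Recall that $\widehat{X}_k^*$ was defined precisely as the minimiser of the map $\hat{x}\mapsto E[\Vert \hat{x}-X_k\Vert^2\vert I_k=i_k]$ over all $\hat{x}\in\mathbb{R}^{n_x}$, equivalently the minimiser over all estimators of the form $\pi_k^e(I_k)$. Hence for the conditional bound it suffices to observe that $\widehat{X}_k^N$, as defined in \eqref{empirical_mean_integral}, is one particular (randomised) estimator: given the realisation of the particle system it is a measurable function of the available information together with the auxiliary randomness of the particles. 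Taking the expectation of the pointwise inequality $\Vert X_k-\widehat{X}_k^*\Vert^2\leq\Vert X_k-\widehat{X}_k^N\Vert^2$, which is itself the first-order optimality condition for the quadratic minimisation, conditionally on $I_k=i_k$ (and also integrating over the particle randomness on the right-hand side) yields $e^{cond}_{k,*}(i_k)\leq e^{cond}_{k,N}(i_k)$.

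More concretely, the key algebraic step I would carry out is the standard bias–variance type decomposition: for any square-integrable estimator $\widehat{X}_k$,
\begin{align*}
E\left[\Vert X_k-\widehat{X}_k\Vert^2\,\middle|\,I_k=i_k\right]
= E\left[\Vert X_k-\widehat{X}_k^*\Vert^2\,\middle|\,I_k=i_k\right]
+ E\left[\Vert \widehat{X}_k-\widehat{X}_k^*\Vert^2\,\middle|\,I_k=i_k\right],
\end{align*}
the cross term vanishing because $X_k-\widehat{X}_k^*$ is, conditionally on $I_k$, orthogonal to any function of $I_k$ (and, in the randomised case, to any function of $I_k$ and the independent particle randomness). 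Applying this with $\widehat{X}_k=\widehat{X}_k^N$ and dropping the nonnegative second term gives the conditional inequality immediately.

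For the total-MSE inequality I would then simply integrate the conditional one against the law of $I_k$: since $e^{cond}_{k,*}(i_k)\leq e^{cond}_{k,N}(i_k)$ holds for a.a. $i_k$, taking expectation over $I_k$ and using the tower property, $e^{tot}_{k,*}=E[e^{cond}_{k,*}(I_k)]\leq E[e^{cond}_{k,N}(I_k)]=e^{tot}_{k,N}$. Alternatively one invokes directly the characterisation from \cite{anderson_optimal_1979}, recalled around \eqref{optimal_estimation_total_MSE}, that $\widehat{X}_k^*$ minimises the total MSE over all admissible estimators $\pi_k^e(I_k)$, into which class $\widehat{X}_k^N$ again falls.

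I do not expect any genuine obstacle here; the statement is essentially a restatement of the defining optimality property of the conditional mean. The only mild point requiring care is that $\widehat{X}_k^N$ depends on extra randomness beyond $I_k$ (the particles), so one must make sure the optimality of $\widehat{X}_k^*$ is phrased over the enlarged class of estimators measurable with respect to $I_k$ and this auxiliary randomness — which is immediate since $\widehat{X}_k^*=E[X_k\vert I_k]$ remains the conditional mean, the particle randomness being independent of $(X_k,I_k)$, so the orthogonality still holds and the same decomposition applies. A secondary technical point is integrability: one needs $X_k\in L^2$ and the particle estimator in $L^2$ for the decomposition to make sense, which I would either assume as a blanket hypothesis or note is implied by the assumptions under which the later Theorems \ref{prop_convergence_cond_MSE} and \ref{prop_bound_total_mse} are stated.
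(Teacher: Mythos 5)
Your proposal is correct and follows essentially the same route as the paper, which simply invokes the optimality of $\widehat{X}_k^*$ (the conditional mean) for the conditional MSE and the classical result recalled around \eqref{optimal_estimation_total_MSE} for the total MSE, without spelling out a proof. Your added details --- the orthogonality/bias--variance decomposition, the observation that the particle randomness is independent of $X_k$ given $I_k$ so $\widehat{X}_k^N$ falls in the admissible (randomised) class, and the tower-property passage to the total MSE --- are exactly the fleshed-out version of that argument.
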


Thus, considerations of optimality give a lower bound on both the conditional and total MSE of the particle filter. To find upper bounds, we first treat the conditional case and extend it later to the total case. 
\paragraph{Bounds on the conditional MSE}

This section is dedicated to the proof of an upper bound on $e^{cond}_{k,N}(i_k)$ and of the convergence of $e^{cond}_{k,N}(i_k)$ to  $e^{cond}_{k,*}(i_k)$. The main result of this section is contained in Theorem \ref{prop_convergence_cond_MSE}.  First, before stating the results of this section, we would like to stress the particularity of our problem by rewriting $e^{cond}_{k,N}(i_k)$ as follows:
\begin{align}
    e^{cond}_{k,N}(i_k)=E\left[{\Vert X_k - \widehat{X}_k^N\Vert}^2\vert I_k=i_k\right]= E\left[{\Vert X_k -\widehat{X}_k^*+\widehat{X}_k^*- \widehat{X}_k^N\Vert}^2\vert I_k=i_k\right],
\end{align}
   By  Young's inequality, we get for any $\epsilon > 0$:
\begin{align}
    E\left[{\Vert X_k - \widehat{X}_k^N\Vert}^2\vert I_k=i_k\right]&\leq\left(1+\epsilon\right)E\left[{\Vert X_k - \widehat{X}_k^*\Vert}^2\vert I_k=i_k\right]\\
    &+\left(1+\frac{1}{\epsilon}\right)E\left[{\Vert \widehat{X}_k^* - \widehat{X}_k^N\Vert}^2\vert I_k=i_k\right],\notag\\
     &\leq\left(1+\epsilon\right)E\left[{\Vert X_k - \widehat{X}_k^*\Vert}^2\vert I_k=i_k\right]\notag\\
     &+\left(1+\frac{1}{\epsilon}\right)E\left[{\Vert \langle\mu_k,\mathrm{Id} \rangle - \langle\mu_k^N,\mathrm{Id} \rangle\Vert}^2\vert I_k=i_k\right].\notag
     \end{align}
The last inequality can be rewritten as follows:
\begin{align}
          e^{cond}_{k,N}(i_k)&\leq(1+\epsilon)e^{cond}_{k,*}(i_k)+\left(1+\frac{1}{\epsilon}\right)e^{filter}_{k,N}(i_k),\label{upper_bound_empirical_MSE}
\end{align}
where $e^{filter}_{k,N}(i_k)=E\left[{\Vert \langle\mu_k,\mathrm{Id} \rangle - \langle\mu_k^N,\mathrm{Id} \rangle\Vert}^2\vert I_k=i_k\right]$.
One can deduce from equation \eqref{upper_bound_empirical_MSE} that, up to $\epsilon$, it is sufficient to control the term $e^{filter}_{k,N}(i_k)$. For $x=(x_1,\dots,x_{n_x})$ in the canonical basis of $\mathbb{R}^{n_x}$, and $j=1,..,{n_x}$ one defines  the $j^{th}$ coordinate function, $\phi_j$, such that, $\forall x \in \mathbb{R}^{n_x} $:
\begin{align*}
    \phi_j(x)=x_j.
\end{align*}
Then, $e^{filter}_{k,N}(i_k)$ can be decomposed as follows:
\begin{align}
    e^{filter}_{k,N}(i_k)&=E\left[\sum_{j=1}^{n_x}{\vert \langle\mu_k,\phi_j \rangle - \langle\mu_k^N,\phi_j \rangle\vert}^2\vert I_k=i_k\right],\notag\\
    &=\sum_{j=1}^{n_x}E\left[{\vert \langle\mu_k,\phi_j \rangle - \langle\mu_k^N,\phi_j \rangle\vert}^2\vert I_k=i_k\right].\label{upper_bound_empirical_MSE_coordinates}
\end{align}

This term can be seen as a quadratic error term of the particle filter $\mu_k^N$ where the scalar test functions are the coordinate maps. Actually, classical error bounds do not deal with unbounded functions like Id, see \cite{crisan_survey_2002} for a survey. Still, a class of unbounded functions has been treated in the form of a Central Limit Theorem in \cite{del_moral_monte-carlo_2002} but the result of convergence in law is too weak to be applied to $e^{filter}_{k,N}(i_k)$.  However, in \cite{hu_general_2011}, a bound on the $L^p$-norm for a class of potentially unbounded test functions is given. The error bound is written conditionally to $i_k$ in the following form:
\begin{align}
    E\left[{\vert \langle\mu_k,\psi \rangle - \langle\mu_k^N,\psi \rangle\vert}^p\vert I_k=i_k\right]\leq C_k\frac{\Vert \psi\Vert_{k,p}^p}{N^{p-p/r}},\label{error_bound_unbounded_article}
\end{align}
where $p\geq2$, $1\leq r\leq2$, $C_k$ is a coefficient depending on $i_k$, $\psi$ is a test function and  ${\Vert\psi\Vert}_{k,p}=(\text{max}(1,{\langle\mu_0,{\vert\psi\vert}^p \rangle}^{\frac{1}{p}},\dots,{\langle\mu_k,{\vert\psi\vert}^p \rangle}^{\frac{1}{p}})$. It was originally written conditionally to a sequence of observation $y_{0:k}$ but the extension conditionally to $i_k$ is straightforward.
In the sequel, we would like to apply the bound \eqref{error_bound_unbounded_article} to $e^{filter}_{k,N}(i_k)$. To do so, we present the adapted assumptions of \cite{hu_general_2011}  for the particular case $\psi=\phi_j$, $p=2$ and $r=2$.
\begin{assumption} \label{as:gamma_1}
For any $k\geq1$, for $0<\epsilon_k<1$,  for a.a. $i_k$,  there exists $ N_k(i_k)>0$ such that, for $N\geq N_k(i_k)$:
\begin{align}
\gamma_k=\underset{i_k}{\emph{inf}}\langle\mu_{k\vert k-1},\rho \rangle&>0, \label{high_likelihood_filter_opt}\\
    {P}( \langle\mu_{k\vert k-1}^N,\rho \rangle>\gamma_k\vert I_k=i_k)&\geq 1-\epsilon_k.\label{high_likelihood_particle_filter}
\end{align}
\end{assumption}
\noindent In particular, under Assumption \ref{as:gamma_1}, for a.a. $i_k$, we have $ \langle\mu_{k\vert k-1},\rho \rangle>\gamma_k$ with $\gamma_k$ independent of $i_k$.
\begin{assumption} \label{as:finite}
For $k\geq1$ and for a.a. $y_{k}$, $x_{k}$, $x_{k-1}$ and $u_{k-1}$:
\begin{align*}
   \rho(y_k,x_k)&<+\infty,  \qquad K(x_k,x_{k-1},u_{k-1})<+\infty.
\end{align*}
\end{assumption}
For $j =1,..,n_x$, we denote respectively the $L^{\infty}$-norm of $K$, $\rho$ and $\rho\phi_j$ w.r.t. $x$ by $ \Vert K\Vert$, $ \Vert \rho\Vert$ and $ \Vert \rho\phi_j\Vert$ i.e, for a.a. $ i_k$:
     \begin{align*}
      \Vert K\Vert&=\underset{x_0,x_1,}{\textrm{sup}}\; K(x_1,x_0,u_{k-1})\\
          \Vert \rho\Vert&=\underset{x}{\textrm{sup}}\; \rho(y_k,x),\\
           \Vert \rho\phi_j^2\Vert&=\underset{x}{\textrm{sup}}\; \vert\phi_j^2(x)\rho(y_k,x)\vert.
     \end{align*}
\begin{assumption} \label{as:bounded}
For $k\geq1$ and for a.a. $i_k$,
\begin{align*}
    \Vert K\Vert<+\infty,\qquad \Vert \rho\Vert <+\infty,\qquad \Vert \rho\phi_j^2\Vert <+\infty.
\end{align*}
\end{assumption}

As $ \Vert \rho\phi_j\Vert \leq {\Vert \rho\phi_j^2\Vert}^{\frac{1}{2}} {\Vert \rho\Vert}^{\frac{1}{2}} $, Assumption \ref{as:bounded}  implies that $ \Vert \rho\phi_j\Vert<+\infty$.
Lemma \ref{prop_upper_bound_cond_mse} presents then an upper bound of $e^{cond}_{k,N}(i_k)$.

\begin{lemma}\label{prop_upper_bound_cond_mse}

Under Assumption \ref{as:gamma_1}, \ref{as:finite} and \ref{as:bounded}, for $\epsilon>0$, for $ j =1,..,n_x$ and $k\geq0$, for a.a. $i_k$, there exist $ C_{k,j}>0$ and $ M_{k,j}>0$, such that for $ N\geq N_k(i_k)$: 
\begin{align}
e^{cond}_{k,N}(i_k)&\leq(1+\epsilon)e^{cond}_{k,*}(i_k)+\left(1+\frac{1}{\epsilon}\right)\frac{\sum_{j=1}^{n}C_{k,j}{\Vert\phi_j\Vert}_{k,2}^2}{N}, \label{upper_bound_empirical_MSE_convergence}
\end{align}
where ${\Vert\phi_j\Vert}_{k,2}=\emph{max}(1,{\langle\mu_0,{\vert\phi_j\vert}^2 \rangle}^{\frac{1}{2}},\dots,{\langle\mu_k,{\vert\phi_j\vert}^2 \rangle}^{\frac{1}{2}})$.
Besides, $C_{k,j}$ and $M_{k,j}$ follow the following coupled recursion, for a.a. $i_k$:
 
 \begin{align}
     M_{0,j}&=3,\label{M_ini}\\
     C_{0,j}&=8\widetilde{C},\label{C_ini}\\
     M_{k,j}&=2+\alpha_{k,j}\left(1+\left(\frac{4-\epsilon_k}{1-\epsilon_k}+1\right)M_{k-1,j}\right)\label{M_current}\\
     C_{k,j}^{\frac{1}{2}}&=2^{\frac{3}{2}}{(\widetilde{C})}^{\frac{1}{2}}M_{k,j}^{\frac{1}{2}}+\frac{2^{\frac{3}{2}}{(\widetilde{C})}^{\frac{1}{2}}\beta_{k,j}}{{(1-\epsilon_k)}^{\frac{1}{2}}}M_{k-1,j}^{\frac{1}{2}}\label{C_current}\\
     &+\frac{{\Vert K\Vert}^{\frac{3}{2}}{\Vert\rho\Vert}_{k,2}\beta_{k,j}}{(1-\epsilon_k)\vert\frac{\gamma_k}{2}-\langle\mu_{k\vert k-1},\rho \rangle\vert}M_{k-1,j}^{\frac{1}{2}}C_{k-1,j}^{\frac{1}{2}}
     +\Vert K \Vert\beta_{k,j}C_{k-1,j}^{\frac{1}{2}},\notag
     \end{align}
     
     \begin{align}
          \beta_{k,j}=\frac{ \Vert \rho\Vert( \Vert \phi_j\rho\Vert+\frac{\gamma_k}{2})}{\frac{\gamma_k}{2}\langle\mu_{k\vert k-1},\rho \rangle},\qquad
     \alpha_{k,j}={\Vert K\Vert}^2\frac{ \Vert \rho\Vert( \Vert \phi_j^2\rho\Vert+\frac{\gamma_k}{2})}{\frac{\gamma_k}{2}\langle\mu_{k\vert k-1},\rho \rangle}\label{beta_current},
     \end{align}

  where:
$\widetilde{C}>0$, ${\Vert\rho\Vert}_{k,2}=\langle\mu_k,{\rho} \rangle \leq  \Vert\rho\Vert$
and $N_k(i_k)\geq\frac{{\Vert\rho\Vert}_{k,2}^2{\Vert K\Vert}^2\underset{j}{\max}\; C_{k-1,j} }{{\vert\frac{\gamma_k}{2}-\langle\mu_{k\vert k-1},\rho \rangle\vert}^2 \epsilon_k}$ with $0<\epsilon_k<1$ is fixed independently of $i_k$.


\end{lemma}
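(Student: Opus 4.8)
The plan is to begin from the decomposition \eqref{upper_bound_empirical_MSE}, already obtained via Young's inequality, which reduces the statement to controlling the pure filtering error $e^{filter}_{k,N}(i_k)$. By \eqref{upper_bound_empirical_MSE_coordinates} this error splits into the $n_x$ coordinate contributions $E[{\vert\langle\mu_k,\phi_j\rangle-\langle\mu_k^N,\phi_j\rangle\vert}^2\vert I_k=i_k]$, each of which is a quadratic weak-error term for the particle filter with the \emph{unbounded} test function $\phi_j$. I would then invoke the $L^p$-error bound \eqref{error_bound_unbounded_article} from \cite{hu_general_2011}, specialised to $\psi=\phi_j$, $p=2$, $r=2$; once the constant $C_k=C_{k,j}$ appearing there is identified and its recursion made explicit, plugging it into \eqref{upper_bound_empirical_MSE} yields \eqref{upper_bound_empirical_MSE_convergence} directly. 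Thus the whole content of the lemma is the explicit, $i_k$-uniform recursion \eqref{M_ini}--\eqref{beta_current} for the pair $(C_{k,j},M_{k,j})$, which is obtained by re-running, for the specific particle filter of \cite{hu_general_2011} (prediction through $K$, likelihood selection, correction by $\rho$, multinomial resampling) and for the specific test function $\phi_j$, the induction that underlies \eqref{error_bound_unbounded_article}, keeping track of every constant.

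The induction would carry two coupled quantities: the order-$1/N$ quadratic error bound with constant $C_{k,j}$, and an auxiliary bound with constant $M_{k,j}$ controlling the conditional expectation of the empirical second moment $\langle\mu_k^N,\phi_j^2\rangle$ (in units of ${\Vert\phi_j\Vert}_{k,2}^2$), which is exactly what enters the Monte Carlo variance estimates at each propagation and resampling sub-step. The base case $k=0$ is a plain i.i.d. Monte Carlo estimate: the particles are drawn from $\mu_0$, so a Marcinkiewicz--Zygmund/Rosenthal-type inequality for sums of i.i.d.\ variables produces $C_{0,j}=8\widetilde{C}$ and $M_{0,j}=3$, with $\widetilde{C}$ the universal constant of that inequality. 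For the inductive step I would decompose the time-$k$ error along the filter's sub-steps: (i) the prediction error splits into a \emph{transported} part $\langle\mu_{k-1},K\phi_j\rangle-\langle\mu_{k-1}^N,K\phi_j\rangle$, handled by the induction hypothesis applied to the test function $K\phi_j$ whose $L^\infty$-norm is bounded through $\Vert K\Vert$, and a \emph{propagation} Monte Carlo part whose conditional variance is at most $\langle\mu_{k-1}^N,K\phi_j^2\rangle$, bounded via $M_{k-1,j}$ and $\Vert K\Vert^2$; (ii) the correction step, where $\langle\mu_k,\phi_j\rangle=\langle\mu_{k\vert k-1},\rho\phi_j\rangle/\langle\mu_{k\vert k-1},\rho\rangle$, calls for a ratio-error estimate in which $\Vert\rho\Vert$, $\Vert\rho\phi_j\Vert$, $\Vert\rho\phi_j^2\Vert$ and the normalisation constants $\langle\mu_{k\vert k-1},\rho\rangle$ (and its empirical counterpart) appear; (iii) the selection and resampling sub-steps add further Monte Carlo variance terms of the same type. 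Collecting the multiplicative factors from all sub-steps and combining the square-root-scale bounds with triangle/Young inequalities produces exactly \eqref{M_current}, \eqref{C_current}, \eqref{beta_current}.

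The main obstacle is the random normalisation constant $\langle\mu_{k\vert k-1}^N,\rho\rangle$ in the denominator of the correction step: to control a ratio error one needs it bounded away from $0$. This is where Assumption \ref{as:gamma_1} and the threshold $N_k(i_k)$ enter. On the event $\{\langle\mu_{k\vert k-1}^N,\rho\rangle>\gamma_k/2\}$ the denominator is under control and the ratio analysis proceeds with the factors shown --- note that by \eqref{high_likelihood_filter_opt} one has $\langle\mu_{k\vert k-1},\rho\rangle>\gamma_k$ uniformly in $i_k$, so $\vert\gamma_k/2-\langle\mu_{k\vert k-1},\rho\rangle\vert\geq\gamma_k/2>0$ and the denominators in \eqref{C_current} stay safely positive; on the complementary event, which has probability at most $\epsilon_k$ once $N\geq N_k(i_k)$, a crude deterministic bound using only Assumptions \ref{as:finite} and \ref{as:bounded} suffices because it is weighted by $\epsilon_k$. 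The explicit threshold $N_k(i_k)\geq \Vert\rho\Vert_{k,2}^2\Vert K\Vert^2\max_j C_{k-1,j}/(\vert\gamma_k/2-\langle\mu_{k\vert k-1},\rho\rangle\vert^2\epsilon_k)$ is precisely what Markov's inequality requires, applied to the already-bounded prediction error $\langle\mu_{k\vert k-1}^N,\rho\rangle-\langle\mu_{k\vert k-1},\rho\rangle$ with test function $\rho$, to force $P(\vert\langle\mu_{k\vert k-1}^N,\rho\rangle-\langle\mu_{k\vert k-1},\rho\rangle\vert\geq\vert\gamma_k/2-\langle\mu_{k\vert k-1},\rho\rangle\vert\mid I_k=i_k)\leq\epsilon_k$. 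The remaining bookkeeping --- propagating the $i_k$-uniformity of $\gamma_k$, measurability in $i_k$, and checking that all suprema are finite under Assumptions \ref{as:finite} and \ref{as:bounded} --- is routine.
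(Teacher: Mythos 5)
Your proposal follows essentially the same route as the paper: the Young's inequality decomposition \eqref{upper_bound_empirical_MSE} plus the coordinate split \eqref{upper_bound_empirical_MSE_coordinates}, followed by the $L^2$ bound of \cite{hu_general_2011} with $\psi=\phi_j$, $p=r=2$ (with $\gamma_k/2$ in place of $\gamma_k$), whose constants are tracked to yield the recursion for $(C_{k,j},M_{k,j})$ and the threshold $N_k(i_k)$. The paper simply cites the computations of that reference where you sketch the induction explicitly, so your argument is correct and matches the paper's proof.
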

\begin{proof}
See Appendix \ref{appendix_1}.
\end{proof}
  
By combining Lemma \ref{prop_optimality_MSE} and \ref{prop_upper_bound_cond_mse}, one finally gets Theorem \ref{prop_convergence_cond_MSE}.

\begin{theorem}
\label{prop_convergence_cond_MSE}
Under Assumption \ref{as:gamma_1}, \ref{as:finite} and \ref{as:bounded}, for $\epsilon>0$, for $k\geq0$, for a.a. $i_k$, there exists $ C_{k,j}>0$ such that $\forall N\geq N_k(i_k)$: 
\begin{align}
e^{cond}_{k,*}(i_k)\leq e^{cond}_{k,N}(i_k)&\leq(1+\epsilon)e^{cond}_{k,*}(i_k)+\left(1+\frac{1}{\epsilon}\right)\frac{\sum_{j=1}^{n}C_{k,j}{\Vert\phi_j\Vert}_{k,2}^2}{N}. \label{bound_cond_mse}
\end{align}
In particular,  for a.a. $i_k$:
\begin{align}
   E\left[{\Vert X_k - \widehat{X}_k^N\Vert}^2\vert I_k=i_k\right]&\underset{N\rightarrow+\infty}{\longrightarrow}E\left[{\Vert X_k - \widehat{X}_k^*\Vert}^2\vert I_k=i_k\right].\label{limit_bound_error_2}
\end{align}
\end{theorem}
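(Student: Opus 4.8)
The plan is simply to assemble the two lemmas already established and then pass to the limit. The lower bound $e^{cond}_{k,*}(i_k)\leq e^{cond}_{k,N}(i_k)$ in \eqref{bound_cond_mse} is exactly the first inequality of Lemma \ref{prop_optimality_MSE}, which holds unconditionally. The upper bound in \eqref{bound_cond_mse} is precisely inequality \eqref{upper_bound_empirical_MSE_convergence} of Lemma \ref{prop_upper_bound_cond_mse}, valid for a.a.\ $i_k$ and all $N\geq N_k(i_k)$ under Assumptions \ref{as:gamma_1}, \ref{as:finite} and \ref{as:bounded}. Concatenating the two displays yields \eqref{bound_cond_mse} directly, with the same constants $C_{k,j}$ and the same threshold $N_k(i_k)$ as produced by Lemma \ref{prop_upper_bound_cond_mse}.

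For the convergence statement \eqref{limit_bound_error_2}, I would fix $\epsilon>0$ and a generic $i_k$ for which \eqref{bound_cond_mse} holds. The key observation is that the constants $C_{k,j}$, the norms ${\Vert\phi_j\Vert}_{k,2}$ and the integer $N_k(i_k)$ depend only on $i_k$ (through $\gamma_k$, $\epsilon_k$ and the various $L^\infty$-norms appearing in the recursion \eqref{C_current}), and in particular are independent of $N$. Hence the remainder term $\left(1+\tfrac1\epsilon\right)\tfrac{\sum_{j=1}^{n_x}C_{k,j}{\Vert\phi_j\Vert}_{k,2}^2}{N}$ tends to $0$ as $N\to+\infty$, so taking $\limsup_{N\to+\infty}$ in \eqref{bound_cond_mse} gives $\limsup_{N\to+\infty}e^{cond}_{k,N}(i_k)\leq(1+\epsilon)\,e^{cond}_{k,*}(i_k)$.

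Since $\epsilon>0$ was arbitrary and the right-hand side is continuous in $\epsilon$ at $0$, this forces $\limsup_{N\to+\infty}e^{cond}_{k,N}(i_k)\leq e^{cond}_{k,*}(i_k)$; combined with the lower bound of Lemma \ref{prop_optimality_MSE} (which gives $\liminf_{N\to+\infty}e^{cond}_{k,N}(i_k)\geq e^{cond}_{k,*}(i_k)$), we conclude $\lim_{N\to+\infty}e^{cond}_{k,N}(i_k)=e^{cond}_{k,*}(i_k)$. Rewriting both sides through their definitions as conditional expectations yields \eqref{limit_bound_error_2}. There is essentially no obstacle here beyond being careful with the order of the two limits ($N\to+\infty$ first, then $\epsilon\to0$) and with the remark that the constants delivered by Lemma \ref{prop_upper_bound_cond_mse} are $N$-independent; all the genuine difficulty has already been absorbed into the proof of Lemma \ref{prop_upper_bound_cond_mse}, namely the application of the $L^p$-error bound \eqref{error_bound_unbounded_article} of \cite{hu_general_2011} to the unbounded coordinate maps $\phi_j$ and the attendant induction on $k$.
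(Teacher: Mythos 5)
Your proposal is correct and follows essentially the same route as the paper's proof: both assemble the lower bound of Lemma \ref{prop_optimality_MSE} with the upper bound of Lemma \ref{prop_upper_bound_cond_mse} and then pass to the limit, the only (cosmetic) difference being that the paper substitutes the diagonal choice $\epsilon=N^{-q}$ with $0<q<1$ to make the right-hand side converge in one step, whereas you fix $\epsilon$, take $\limsup_{N\to+\infty}$, and then let $\epsilon\to0$. Your observation that $C_{k,j}$, ${\Vert\phi_j\Vert}_{k,2}$ and $N_k(i_k)$ do not depend on $N$ (nor on the Young parameter $\epsilon$) is exactly what both arguments rely on, so no gap remains.
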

\begin{proof}
See Appendix \ref{appendix_2}.
\end{proof}
\begin{remark}\hfill

 Assumption \ref{as:finite} is very mild because most systems have finite likelihood and transition kernel. Assumption \ref{as:bounded}  requires that the likelihood function $\rho$ vanishes sufficiently when $\Vert x\Vert \rightarrow +\infty$ for a fixed vector $i_k$ to counter the increasing effect of $\phi_j$ as explained in \cite{hu_basic_2008}. It is typically verified with Gaussian measurement noise. Assumption \ref{as:gamma_1} is natural in particle filtering. It requires that the predicted {distribution} $\mu_{k|k-1}$ and the predicted \emph{particles} match the likelihood $\rho$ for each information vector $i_k$. The failure of this property a well known issue in particle filtering and is studied in more depth in \cite{hu_basic_2008}, \cite{le_gland_stability_2004} and \cite{crisan_survey_2002}. It is notably showed that it has an impact on the precision of some error bounds.
    Intuitively,  the algorithm from  \cite{hu_general_2011} forces the particles to be positioned where the true state is sufficiently likely to be with respect to the new observation $y_k$.
    
 Theorem \ref{prop_convergence_cond_MSE} basically means that $\widehat{X}_k^N$ is  near-optimal with respect to the conditional MSE when the number of particle is large enough. Besides, equation \eqref{bound_cond_mse} gives an estimation of the speed of convergence of the empirical MSE. For example, for $q=\frac{1}{2}$, one can see that the speed of convergence of this MSE is of order $\frac{1}{\sqrt{N}}$. It is slower than usual in Monte Carlo methods. One would rather expect a convergence rate of order $\frac{1}{N}$. The conservativeness of the bound \eqref{bound_cond_mse} comes from our use of Young's inequality instead of Minkowsky's inequality. Actually, a very similar reasoning could be undertaken using Minkowsky's inequality and one would get a better convergence rate but it would involve the conditional Root Mean Square Errors (RMSE), ${(e^{cond}_{k,*})}^{\frac{1}{2}}$ and ${(e^{cond}_{k,N})}^{\frac{1}{2}}$, and not the MSE. The RMSE is easier to interpret than the MSE in practice in an estimation context for the same reasons that the standard deviation is easier to relate to concrete data than the variance. However, minimising a MSE is more adapted to the context of stochastic optimisation defined in Section \ref{sec_dp_control+estimation}. That is why, we focus on MSEs and not RMSEs in this section even if we lose some precision in the error bounds. 

\end{remark}

 In the sequel, we would like to extend the result of Theorem \ref{prop_convergence_cond_MSE} to the total MSE. A intuitive way would be to integrate equation \eqref{bound_error_1} over $i_k$. However, it is not possible in its current form because $C_{k,j}$ and $M_{k,j}$ depend on $i_k$ which makes the integrability on the right-hand of equation \eqref{bound_error_1} hard to evaluate. Moreover, the threshold $N_k(i_k)$ also depends on $i_k$ so one cannot apply the Dominated Convergence theorem directly.
 
 \paragraph{Bound on the total MSE}

 The main contribution of this section is the extension of the result of Theorem \ref{prop_convergence_cond_MSE} to the total MSE. This result is presented in Theorem \ref{prop_bound_total_mse}. 
 
 We first assume that $X_k$ is square integrable
\begin{assumption}\label{as:square_int}
For any $ k\geq0$:
\begin{align*}
  \mathbb{E}[{\Vert X_k \Vert}^2]<+\infty.  
\end{align*}
\end{assumption}

As in the conditional case, Lemma \ref{prop_optimality_MSE} provides a lower bound of $e^{tot}_{k,N}$, we are then looking for an upper bound of  $e^{tot}_{k,N}$. As suggested earlier, one would like to integrate the right-hand  side of equation \eqref{upper_bound_empirical_MSE_convergence} w.r.t. $i_k$, which is defined, for $ \epsilon>0$, for a.a. $i_k$ and for any $ N\geq N_k(i_k)$ by:
\begin{align*}
     (1+\epsilon)e^{cond}_{k,*}(i_k)+\left(1+\frac{1}{\epsilon}\right)\frac{\sum_{j=1}^{n}C_{k,j}{\Vert\phi_j\Vert}_{k,2}^2}{N}.
\end{align*}

Note first that, under Assumption \ref{as:square_int}, $E[e^{cond}_{k,*}(i_k)]=e^{tot}_{k,*}<+\infty$. 
and that $\forall k\geq0, \forall j = 1,..,n$, $\langle\mu_k,{\vert\phi_j\vert}^2 \rangle$
is integrable. Thus, ${\Vert\phi_j\Vert}_{k,2}^2$ is integrable. 

Actually, the first issue lays in the fact that $C_{k,j}$ depends on $i_k$ and it is not clear at all from equations  \eqref{M_ini} to \eqref{beta_current} that each term $C_{k,j}{\Vert\phi_j\Vert}_{k,2}^2$ is integrable w.r.t. $i_k$. To tackle this issue, we show that if the coefficients ${\Vert K\Vert}$, ${\Vert \rho\Vert}$, ${\Vert \rho\phi_j^2\Vert}$ and ${\Vert \rho\phi_j\Vert}$ are bounded uniformly w.r.t. $i_k$, then $C_{k,j}$ and $M_{k,j}$  are too. This leads to Assumption \ref{as:bounded_2}.


 \begin{assumption}  $\forall k \geq0$, $\forall j =1,..,n_x $:
 \label{as:bounded_2}
 \begin{align*}
  {\Vert K\Vert}_{\infty}&=\underset{x_1,x_0,u_0}{\emph{sup}}\; K(x_0,x_1,u_0)<+\infty,\\
     {\Vert \rho\Vert}_{\infty}&=\underset{x,y}{\emph{sup}}\; \rho(y,x)<+\infty,\\
           {\Vert \rho\phi_j^2\Vert}_{\infty}&=\underset{x,y}{\emph{sup}}\;\vert\phi_j^2(x)\rho(y,x)\vert<+\infty.
 \end{align*}
 \end{assumption}
 
 It is clear that Assumption \ref{as:bounded_2} implies Assumption \ref{as:bounded}.
 As $ {\Vert \rho\phi_j\Vert}_{\infty} \leq {\Vert \rho\phi_j^2\Vert}^{\frac{1}{2}}_{\infty} {\Vert \rho\Vert}^{\frac{1}{2}}_{\infty} $, Assumption \ref{as:bounded_2} implies that $ {\Vert \rho\phi_j\Vert}_{\infty}<+\infty$.

  
  We can now state the following Lemma:
  
  \begin{lemma}\label{prop_bound}
  Under Assumptions \ref{as:gamma_1}, \ref{as:finite}, \ref{as:square_int} and \ref{as:bounded_2}, for $k \geq0$, for $ j =1,..,n_x $, there exist $ C_{k,j}' >0$ and $M_{k,j}'>0$ such that, for a.a. $i_k$:
  \begin{align*}
       C_{k,j}\leq C_{k,j}'<+\infty,\qquad
        M_{k,j}\leq M_{k,j}'<+\infty.
  \end{align*}
  \end{lemma}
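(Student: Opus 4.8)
The plan is to argue by induction on $k$, following the coupled recursion \eqref{M_ini}--\eqref{beta_current} that defines $M_{k,j}$ and $C_{k,j}$ in Lemma \ref{prop_upper_bound_cond_mse}. The base case $k=0$ is immediate: $M_{0,j}=3$ and $C_{0,j}=8\widetilde{C}$ do not depend on $i_0$ at all, so one simply takes $M_{0,j}'=3$ and $C_{0,j}'=8\widetilde{C}$.

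For the inductive step, the key preliminary observation is that \emph{every} $i_k$-dependent quantity appearing in \eqref{M_current}, \eqref{C_current} and \eqref{beta_current} admits a bound that is uniform in $i_k$. Indeed, by Assumption \ref{as:bounded_2} one has $\Vert K\Vert \le \Vert K\Vert_{\infty}$, $\Vert\rho\Vert \le \Vert\rho\Vert_{\infty}$, $\Vert\rho\phi_j^2\Vert \le \Vert\rho\phi_j^2\Vert_{\infty}$ and hence $\Vert\rho\phi_j\Vert \le \Vert\rho\phi_j\Vert_{\infty}$, all finite and independent of $i_k$; moreover $\Vert\rho\Vert_{k,2}=\langle\mu_k,\rho\rangle \le \Vert\rho\Vert_{\infty}$. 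Crucially, Assumption \ref{as:gamma_1} provides $\gamma_k \le \langle\mu_{k\vert k-1},\rho\rangle \le \Vert\rho\Vert_{\infty}$ with $\gamma_k>0$ independent of $i_k$ (it is defined as an infimum over $i_k$), so the denominators stay bounded away from $0$ uniformly: $\tfrac{\gamma_k}{2}\langle\mu_{k\vert k-1},\rho\rangle \ge \tfrac{\gamma_k^2}{2}>0$ and $\bigl\vert\tfrac{\gamma_k}{2}-\langle\mu_{k\vert k-1},\rho\rangle\bigr\vert = \langle\mu_{k\vert k-1},\rho\rangle-\tfrac{\gamma_k}{2}\ge \tfrac{\gamma_k}{2}>0$. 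Since $\epsilon_k\in(0,1)$ is fixed independently of $i_k$ by the statement of Lemma \ref{prop_upper_bound_cond_mse}, substituting these bounds into \eqref{beta_current} yields finite constants $\alpha_{k,j}'\ge\alpha_{k,j}$ and $\beta_{k,j}'\ge\beta_{k,j}$, uniform in $i_k$.

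Then I would close the induction. Assuming $M_{k-1,j}\le M_{k-1,j}'$ and $C_{k-1,j}\le C_{k-1,j}'$ for a.a. $i_k$, substitution into \eqref{M_current} gives $M_{k,j}\le 2+\alpha_{k,j}'\bigl(1+(\tfrac{4-\epsilon_k}{1-\epsilon_k}+1)M_{k-1,j}'\bigr)=:M_{k,j}'<+\infty$, which is independent of $i_k$. Feeding the uniform bounds on $M_{k,j}$, $M_{k-1,j}$, $C_{k-1,j}$, $\beta_{k,j}$, $\Vert K\Vert$, $\Vert\rho\Vert_{k,2}$ and on the two denominators into the four summands of \eqref{C_current} shows that each summand is a product of uniformly bounded quantities, so $C_{k,j}^{1/2}$ is dominated by an explicit finite expression $(C_{k,j}')^{1/2}$, whence $C_{k,j}\le C_{k,j}'<+\infty$ uniformly in $i_k$. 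Here Assumption \ref{as:finite} is what makes the recursion well posed in the first place (all quantities being finite), while Assumption \ref{as:square_int} is not actually used for this bound — it enters only later, when the resulting uniform bound is integrated against the law of $I_k$ in the proof of Theorem \ref{prop_bound_total_mse}.

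The main obstacle is exactly the single point highlighted in the second paragraph: controlling $\langle\mu_{k\vert k-1},\rho\rangle$ and $\bigl\vert\tfrac{\gamma_k}{2}-\langle\mu_{k\vert k-1},\rho\rangle\bigr\vert$ from below uniformly in $i_k$. Without a uniform lower bound on $\langle\mu_{k\vert k-1},\rho\rangle$, the constants $\alpha_{k,j},\beta_{k,j}$, and therefore $C_{k,j}$, could blow up along configurations $i_k$ for which the predicted distribution poorly matches the likelihood; Assumption \ref{as:gamma_1} is precisely what rules this out by positing $\gamma_k=\inf_{i_k}\langle\mu_{k\vert k-1},\rho\rangle>0$. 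Everything else is routine bookkeeping of sums and products through the recursion.
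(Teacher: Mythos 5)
Your proposal is correct and follows essentially the same route as the paper's proof: both replace every $i_k$-dependent quantity in the recursion \eqref{M_ini}--\eqref{beta_current} by the uniform bounds of Assumption \ref{as:bounded_2}, use Assumption \ref{as:gamma_1} to bound the denominators below by $\tfrac{\gamma_k}{2}$ (resp.\ $\tfrac{\gamma_k^2}{2}$), and conclude $C_{k,j}\leq C_{k,j}'$, $M_{k,j}\leq M_{k,j}'$ by recursion on $k$ with the same primed constants. Your side remark that Assumption \ref{as:square_int} is not needed for this lemma but only for the integration step in Theorem \ref{prop_bound_total_mse} is also consistent with the paper's argument.
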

  
  \begin{proof}
  See Appendix \ref{appendix_3}.
  \end{proof}

   Finally, each term $C_{k,j}'{\Vert\phi_j\Vert}_{k,2}^2$ is integrable which solves our first problem. Our second problem is that the threshold  $N_k(i_k)$ also depends on $i_k$. Actually, under the same assumption, one can find a larger threshold that does not depend on $i_k$. This is the topic of the next result:
  \begin{theorem}\label{prop_bound_total_mse}
  Under Assumptions \ref{as:gamma_1}, \ref{as:finite} and \ref{as:bounded_2}, for $0<q<1$, for $ k\geq0$, there exists $ \bar{N}_k>0$, such that for any $ N\geq \bar{N}_k$
  \begin{align}
   e^{tot}_{k,*}\leq e^{tot}_{k,N}\leq\left(1+\frac{1}{N^q}\right)e^{tot}_{k,*}+&\left(1+N^q\right)\frac{\sum_{j=1}^{n}C_{k,j}'E\left[{\Vert\phi_j\Vert}_{k,2}^2\right]}{N}<+\infty\label{bound_error_2}.
\intertext{In particular:}
     E\left[{\Vert X_k - \widehat{X}_k^N\Vert}^2\right]&\underset{N\rightarrow+\infty}{\longrightarrow}E\left[{\Vert X_k - \widehat{X}_k^*\Vert}^2\right].
 \end{align}
  
  \end{theorem}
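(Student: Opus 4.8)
The plan is to obtain the upper bound in \eqref{bound_error_2} by integrating the conditional estimate of Lemma \ref{prop_upper_bound_cond_mse} over $i_k$, after using Lemma \ref{prop_bound} to render both the constants and the admissibility threshold uniform in $i_k$; the lower bound is already Lemma \ref{prop_optimality_MSE}, and the convergence statement follows by sending $N\to+\infty$.

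First I would fix $0<q<1$ and specialise \eqref{upper_bound_empirical_MSE_convergence} to the Young parameter $\epsilon=N^{-q}$ (this $\epsilon$ is independent of the $\epsilon_k$ of Assumption \ref{as:gamma_1}, hence does not enter the threshold). This gives, for a.a.\ $i_k$ and all $N\geq N_k(i_k)$,
\[
e^{cond}_{k,N}(i_k)\leq\Bigl(1+\tfrac{1}{N^q}\Bigr)e^{cond}_{k,*}(i_k)+\bigl(1+N^q\bigr)\frac{\sum_{j=1}^{n_x}C_{k,j}{\Vert\phi_j\Vert}_{k,2}^2}{N}.
\]
By Lemma \ref{prop_bound} one can replace $C_{k,j}$ by the $i_k$-independent constant $C_{k,j}'$, so the right-hand side is dominated by a quantity that is integrable in $i_k$: under Assumption \ref{as:square_int}, $E[e^{cond}_{k,*}(I_k)]=e^{tot}_{k,*}<+\infty$, and since ${\Vert\phi_j\Vert}_{k,2}^2\leq1+\sum_{\ell=0}^{k}\langle\mu_\ell,\vert\phi_j\vert^2\rangle$, the tower property gives $E[{\Vert\phi_j\Vert}_{k,2}^2]\leq1+\sum_{\ell=0}^{k}E[\vert\phi_j(X_\ell)\vert^2]<+\infty$.

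Next I would remove the $i_k$-dependence of the threshold. By Lemma \ref{prop_upper_bound_cond_mse} one may take $N_k(i_k)=\frac{{\Vert\rho\Vert}_{k,2}^2{\Vert K\Vert}^2\max_j C_{k-1,j}}{\bigl|\frac{\gamma_k}{2}-\langle\mu_{k\vert k-1},\rho\rangle\bigr|^2\epsilon_k}$. Assumption \ref{as:gamma_1} yields $\langle\mu_{k\vert k-1},\rho\rangle>\gamma_k$, whence $\bigl|\frac{\gamma_k}{2}-\langle\mu_{k\vert k-1},\rho\rangle\bigr|>\frac{\gamma_k}{2}$ uniformly in $i_k$; Assumption \ref{as:bounded_2} gives ${\Vert\rho\Vert}_{k,2}\leq{\Vert\rho\Vert}_{\infty}$ and ${\Vert K\Vert}\leq{\Vert K\Vert}_{\infty}$; and Lemma \ref{prop_bound} gives $C_{k-1,j}\leq C_{k-1,j}'$. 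Combining these bounds, $N_k(i_k)\leq\bar N_k$ for a finite $\bar N_k$ not depending on $i_k$. Hence, for every $N\geq\bar N_k$ the displayed conditional inequality holds for a.a.\ $i_k$; taking the outer expectation over $I_k$, using $e^{tot}_{k,N}=E[e^{cond}_{k,N}(I_k)]$ and the integrability above, we obtain \eqref{bound_error_2} with finite right-hand side. Finally, as $N\to+\infty$, since $0<q<1$ we have $N^{-q}\to0$ and $\frac{1+N^q}{N}=N^{-1}+N^{q-1}\to0$, so the upper bound tends to $e^{tot}_{k,*}$; sandwiched with $e^{tot}_{k,*}\leq e^{tot}_{k,N}$ this forces $E[{\Vert X_k-\widehat X_k^N\Vert}^2]\to E[{\Vert X_k-\widehat X_k^*\Vert}^2]$.

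The crux of the argument, and the reason Lemma \ref{prop_bound} and Assumption \ref{as:bounded_2} are needed beforehand, is the simultaneous dependence on $i_k$ of the constant $C_{k,j}$ and of the threshold $N_k(i_k)$ in the conditional bound: once both are controlled uniformly, exchanging the inequality with the expectation over $I_k$ is routine, and the choice $\epsilon=N^{-q}$ with $0<q<1$ is exactly what keeps the bound finite for each $N$ while letting both perturbation factors vanish in the limit.
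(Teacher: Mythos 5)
Your proposal is correct and follows essentially the same route as the paper's own proof: integrate the conditional bound of Lemma \ref{prop_upper_bound_cond_mse} after using Lemma \ref{prop_bound} together with Assumptions \ref{as:gamma_1} and \ref{as:bounded_2} to make both the constants $C_{k,j}$ and the threshold $N_k(i_k)$ uniform in $i_k$, then choose $\epsilon=N^{-q}$ and sandwich with Lemma \ref{prop_optimality_MSE}. The only (harmless) additions are your explicit verification of $E[{\Vert\phi_j\Vert}_{k,2}^2]<+\infty$ and your correct observation that square-integrability (Assumption \ref{as:square_int}) is needed, which the paper also uses in its proof even though it is omitted from the theorem's hypothesis list.
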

   
  \begin{proof}
  See Appendix \ref{appendix_4}.
  \end{proof}
  Similarly to Theorem \ref{prop_convergence_cond_MSE}, Theorem \ref{prop_bound_total_mse} means that the total MSE associated with $\widehat{X}^N_k$ is close to be optimal if the number of particle is sufficiently high. Theorem \ref{prop_bound_total_mse} also provides an estimation of the rate of convergence. This leads to several remarks and interpretations.
\begin{remark}\hfill

 Assumption \ref{as:square_int} is mild and needed to ensure that the optimal estimation error is finite. Assumption \ref{as:bounded_2} is stronger than Assumption \ref{as:bounded}. It is typically verified when the measurement noise is bounded. This will be illustrated in Section \ref{sec_example}.
 
 The coefficients $M_{k,j}'$ and $C_{k,j}'$ tend to $+\infty$ with time. Actually, It can be easily seen from equation \eqref{M_current'} that, for $ j=1,..,n_x$, and $k\geq1$, 
    \begin{align*}
        M_{k,j}'\geq \alpha_{k,j}'\theta_k M_{k-1,j}',
    \end{align*}
    with $\theta_k=1+\frac{4-\epsilon_k}{1-\epsilon_k}\geq2$.
    In fact, in most relevant cases, $\alpha_{k,j}'\geq1$ so $C_{k,j}'$ and $M_{k,j}'$ go to $+\infty$ as $k\rightarrow+\infty$.
     Thus, $\bar{N}_k$ tends to $+\infty$ too which means that the error bound from Theorem \ref{prop_bound_total_mse} is not uniform in $k$. This is classical for this type of error bound as described in \cite{crisan_survey_2002}. To get uniformity in $k$, one typically need a mixing assumption on $K$, see \cite{le_gland_stability_2004}.
Theorem \ref{prop_convergence_cond_MSE} justifies the use of particle in the framework of Section \ref{sec_dp_control+estimation} because it shows that $\widehat{X}_k^N$ solves approximately Problem \eqref{inner_optimal_estimation_MSE} which was the first objective of this section. Theorem \ref{prop_bound_total_mse} rather paves the way to a proof of error bounds on particle filters oriented toward particle filter moment stability. Actually, let us assume that a stricly suboptimal estimator of $X_k$ w.r.t. to the total MSE, denoted by $\widehat{X}_k^{sub}$, is available. For example, $\widehat{X}_k^{sub}$ may come from a Kalman-like filter in a nonlinear case. By optimality of  $\widehat{X}_k^{*}$ and by Theorem \ref{prop_bound_total_mse}  for a sufficiently large $N$, one gets, for $k\geq0$:
    \begin{align}
          e^{tot}_{k,*}\leq e^{tot}_{k,N}< E\left[{\Vert X_k - \widehat{X}_k^{sub}\Vert}^2\right]. \label{filter_comparison}
    \end{align}

   This means that the particle filter has better performance than any other suboptimal filter if the number of particles is sufficiently high. This result is not surprising and observed in practice. However, a rigorous proof of such a result has never been made to the best of our knowledge.  As a result, one can see from equation \eqref{filter_comparison} that is the MSE generated by $\widehat{X}_k^{sub}$ is bounded w.r.t. k then so is $e^{tot}_{k,N}$.  This  seems to be a good alternative in order to show MSE boundedness for a particle filter. In fact, in \cite{reif_stochastic_1999}, under a small error assumption, the stability of the Extended Kalman filter is proven in terms of bounded MSE. Other results of stability of nonlinear filters can be found in \cite{karvonen_stability_2014}. However, this statement is not rigorous for the moment because one still needs a  number of particle increasing with $k$ according to the previous remark. Note that this kind of result is hard to obtain if one considers directly the particle filter because one would typically need some nonlinear stochastic observability condition. See again \cite{karvonen_stability_2014}. Finally, this kind of result is very useful in an output feedback control perspective because it could be a first step toward showing a closed-loop moment stability result of the true state of system, $X_k$, in a nonlinear framework. See \cite{hokayem_stochastic_2012} for an example of outputfeedback moment stability with bounded MSE in a linear context.

\end{remark}
\section{Explicit dual control and Optimal control with an estimation based cost}\label{sec_modified_explicit}
The objective of this section is to use Step \ref{optimal_control_estimation_cost_step} from Section \ref{sec_dp_control+estimation} as a formal justification of a class of dual controllers called Explicit dual controllers. More precisely, a link between classically used empirical losses of information added to the cost and the optimal value of the estimation problem \eqref{sto_control_imperfect_inf_infinite_horizon_state_filter_estimation_cost} is presented.

\subsection{Explicit dual control}

In this section, we focus on Explicit dual controllers based on integrated experiment design. The idea in this case is to add a quantitative measure of the loss of information in the cost of a stochastic optimal control problem like Problem \eqref{sto_opt_control_imperfect_inf_infinite_horizon_state_filter}. If one defines $g^{info}$: $\mathbb{R}^{n_x}\longrightarrow \mathbb{R}^+$ as the loss of information, one can write an infinite-horizon Explicit dual control problem as follows:
\begin{align}
\begin{array}{rrclcc}
\widetilde{V}(\mu)=\displaystyle \underset{\pi_{0}^c,\pi_{0}^e }{\text{min}} & \multicolumn{3}{l}{E^{\pi}\left[\sum_{k=0}^{+\infty}\alpha^k( \tilde{g}^c(\mu_k,U_k)+\tilde{g}^{info}(\mu_k,\widehat{X}_k))| \mu_0 = \mu\right]} \\
\textrm{s.t.} &\mu_{k+1}&=&F\left(\mu_k,Y_{k+1},U_k\right),\\
& U_k & = & \pi_0^c(\mu_k), \\
& \widehat{X}_k & = & \pi_0^e(\mu_k), \; \forall k\geq 0,\\
\end{array}
\label{sto_control+information_imperfect_inf_infinite_horizon_state_filter_detail}
\end{align}
Optimal solutions of Problem \eqref{sto_control+information_imperfect_inf_infinite_horizon_state_filter_detail} exhibit the property of Explicit dual effect. The term 'explicit' comes from the external nature of the coupling between estimation and control realised by $g^{info}$. Note that optimal solutions of Problem \eqref{sto_control+information_imperfect_inf_infinite_horizon_state_filter_detail} also exhibit implicit dual effect for the same reason as those of Problem \eqref{sto_opt_control_imperfect_inf_infinite_horizon_state_filter} do.   
However, Problem \eqref{sto_control+information_imperfect_inf_infinite_horizon_state_filter_detail} is usually destined in practice to be approximated  by an Open-loop finite-horizon problem used inside a dual Explicit Stochastic Model Predictive control scheme. See \cite{mesbah_stochastic_2017} for a review on the subject. In this case, the Open-loop approximation destroys the feedback structure of the policies and the implicit dual effect is lost. The main interest of Problem  \eqref{sto_control+information_imperfect_inf_infinite_horizon_state_filter_detail} is precisely that $g^{info}$ preserves some dual effect even after these two approximations.
Still, the main flaw of this technique is that $g^{info}$ is generally empirical and chosen ad hoc which makes Problem \eqref{sto_control+information_imperfect_inf_infinite_horizon_state_filter_detail} hard to connect with the original control problem  \eqref{sto_opt_control_imperfect_inf_infinite_horizon_state_filter}.

\subsection{Optimal control with an estimation based-cost}
We recall here the modified control problem that appears in Step \ref{optimal_control_estimation_cost_step} from Section \ref{sec_dp_control+estimation}:
 \begin{align}
\begin{array}{rrclcc}
\widetilde{V}(\mu)=\displaystyle \underset{\pi_{0}^c }{\text{min}} & \multicolumn{3}{l}{E^{\pi}\left[\sum_{k=0}^{+\infty}\alpha^k( \tilde{g}^c(\mu_k,U_k)+\tilde{g}^e_*(\mu_k))| \mu_0 = \mu\right]} \\
\textrm{s.t.} &\mu_{k+1}&=&F\left(\mu_k,Y_{k+1},U_k\right),\\
& U_k & = & \pi_0^c(\mu_k), \; \forall k\geq 0.\\
\end{array}
\label{sto_control_imperfect_inf_infinite_horizon_state_filter_estimation_cost_2}
\end{align}
Notice that, since $\tilde{g}^e_*$ is the minimum estimation error given the current distribution $\mu_k$, one could be tempted to choose controls that minimise it w.r.t $\mu_k$ in order to get more information on the system. In this sense, $\tilde{g}^e_*$ can be seen as a measure of a loss of information. Although, it is not a very practical one because it  depends on the value of each past and presents observations and is therefore hard to predict. We would like to use $g^{info}$ as a simpler and a priori measure of information. 

We would like to introduce a new perspective linking the Explicit dual control problem \eqref{sto_control+information_imperfect_inf_infinite_horizon_state_filter_detail} and the the estimation-based control problem \eqref{sto_control_imperfect_inf_infinite_horizon_state_filter_estimation_cost_2} which is itself a step in the resolution of the coupled estimation and control scheme from Section \ref{sec_dp_control+estimation}. We propose to consider that when one solves Problem 
 \eqref{sto_control+information_imperfect_inf_infinite_horizon_state_filter_detail}, one solves a sort of approximation of Problem \eqref{sto_control_imperfect_inf_infinite_horizon_state_filter_estimation_cost_2} and not a modified version of the classical problem \eqref{sto_opt_control_imperfect_inf_infinite_horizon_state_filter}. With this point of view, we try to narrow the gap created by the empirical consideration in  Problem \eqref{sto_control+information_imperfect_inf_infinite_horizon_state_filter_detail}. As a result, one could imagine new explicit dual schemes with better approximations of  $\tilde{g}^e_*$ than current ones.

\section{An application in Aerospace engineering: Terrain-aided navigation}\label{sec_example}

The goal of this section is to give a typical example of application where the practical resolution of the estimation and control steps from Section \eqref{sec_dp_control+estimation} is relevant. We also show that the modelling  assumptions form Section \ref{sec_particle_near_opti} are satisfied in this example. The application under consideration is the problem of localisation and guidance of a drone by Terrain-aided Navigation (TAN).  The objective is to be able to localise a drone and guide it in a 3D space using speed measurements and one dimensional measurement related to the position. In the Cartesian coordinates, we assume that the dynamics of the drone are described as follows:

\begin{itemize}
    \item the state is composed of a 3 dimensional position and a 3 dimensional speed: $X_k={(x_{1,k},x_{2,k},x_{3,k},v_{1,k},v_{2,k},v_{3,k})}$ and the control of a 3 dimensional acceleration $U_k={(u_{1,k},u_{2,k},u_{3,k})}$. Note that $(x_{1,k},x_{2,k})$ represents the horizontal position and $x_{3,k}$ the altitude.
    \item its dynamics (\ref{sto_sys_lin_sat}) is linear with bounded controls, for $k\in \mathbb{N}$,
    \begin{align}
        X_{k+1}=AX_k+BU_k+\xi_k,\label{sto_sys_lin_sat}\qquad
        \Vert U_k \Vert\leq U_{max},
    \end{align}
   where $U_{max}>0$, $A\in \mathbb{R}^{n_x\times n_x}$ and $B\in \mathbb{R}^{n_x\times n_u}$ correspond to a discrete-time second order and $\xi_k\sim \mathcal{N}(0,Q)$ with Q positive semi-definite.
\end{itemize}

We assume that the dynamics has a relatively simple form because the main difficulty of this application is the nature of the observations. Indeed, the only information on the position is a measurement of the difference between the altitude of the drone, $x_{3,k}$, and the altitude of the corresponding vertical point on the ground. We also suppose that the ground is represented by a bounded map, $h_M: \mathbb{R}^2 \longrightarrow \mathbb{R}^+$. In practice, $h_{M}$ is often determined by a smooth interpolation of data points which makes it very nonlinear. Therefore, the observation equation reads:
\begin{align}
    Y_k=h_{det}(X_k)+\eta_k, \label{obs_eq_extended_real_map}
\end{align}
where $h_{det}(X_k) =\begin{bmatrix} v_{1,k}\\v_{2,k}\\v_{3,k}\\ x_{3,k}-{h}_M(x_{1,k},x_{2,k})\end{bmatrix}$ and $\eta_k$ is measurement noise. Its  distribution is assumed to have a density w.r.t the Lebesgue measure denoted by $\rho_{\eta}$ which is also bounded with a bounded support. Note that assuming a bounded sensor noise is relevant in many case and especially in Aerospace engineering.



Intuitively, the use of particle filters is justified in this case by the map $h_M$ which is nonlinear and may have ambiguities resulting in a multi-modal distribution $\mu_k$. Actually, its modes are closely related to the level sets of  $h_M$. As a matter of fact, Kalman filters cannot accurately deal with this problem. 
Moreover, it appears very naturally that dual control is required in this application. Indeed, the quality of the observations depends on the area that is flied over by the drone. If the drone flies over a flat area with constant altitude, then, one measurement of height matches a whole horizontal area and the estimation error on $(x_{1,k},x_{2,k})$ is of the order of magnitude of the size of the area, which can be very large. On the contrary, if the drone flies over a rough terrain, then one measurement of height corresponds to a smaller area on the ground and the estimation error is reduced. Thus, in TAN, information probing consists in flying over informative areas of the ground. Therefore, information measures based on some norm of the gradient of the $h_m$, like the Fisher Information Matrix \cite{tichavsky_posterior_1998}, are relevant. 
Finally, we would like to show that the system \eqref{sto_sys_lin_sat}-\eqref{obs_eq_extended_real_map} is an example of system where Assumptions \ref{as:finite}, \ref{as:bounded} and \ref{as:bounded_2} hold.  This results is summed in Proposition \ref{prop_ex_as}.
\begin{proposition}\label{prop_ex_as}
The system \eqref{sto_sys_lin_sat}-\eqref{obs_eq_extended_real_map} satisfies Assumptions \ref{as:finite}, \ref{as:bounded} and \ref{as:bounded_2}. 
\end{proposition}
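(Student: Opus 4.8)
The plan is to write down the transition density $K$ and the likelihood $\rho$ explicitly from \eqref{sto_sys_lin_sat}--\eqref{obs_eq_extended_real_map} and then read off each required bound. Since $\xi_k\sim\mathcal N(0,Q)$ with $Q$ invertible (as the standing hypothesis that $K$ admits a Lebesgue density requires), the law of $X_{k+1}$ given $(X_k,U_k)=(x,u)$ is $\mathcal N(Ax+Bu,Q)$, so
\[
K(z,x,u)=(2\pi)^{-n_x/2}(\det Q)^{-1/2}\exp\!\Big(-\tfrac12(z-Ax-Bu)^{\top}Q^{-1}(z-Ax-Bu)\Big),
\]
which is finite at every point and dominated by the peak value $(2\pi)^{-n_x/2}(\det Q)^{-1/2}$ uniformly in $(z,x,u)$; this yields the $K$-parts of Assumptions \ref{as:finite}, \ref{as:bounded} and \ref{as:bounded_2} at once. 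Similarly \eqref{obs_eq_extended_real_map} gives $\rho(y,x)=\rho_\eta\big(y-h_{det}(x)\big)$, and since $\rho_\eta$ is bounded, $\rho$ is everywhere finite with $\rho(y,x)\le\|\rho_\eta\|_\infty$ for all $(x,y)$. This already establishes Assumption \ref{as:finite} in full and the $\|\rho\|$-bounds in Assumptions \ref{as:bounded} and \ref{as:bounded_2}.

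What is left is to control $\|\rho\phi_j^2\|$ and its version uniform in $i_k$. The lever here is that $\rho_\eta$ has bounded support, say contained in $\{\|\eta\|\le R\}$: if $\rho(y,x)\neq0$ then $\|y-h_{det}(x)\|\le R$, and since $h_{det}(x)=(v_1,v_2,v_3,\,x_3-h_M(x_1,x_2))^{\top}$ this forces $|v_\ell-y_\ell|\le R$ for $\ell=1,2,3$ and $|x_3-h_M(x_1,x_2)-y_4|\le R$; using $0\le h_M\le\|h_M\|_\infty$, the latter gives $|x_3|\le|y_4|+\|h_M\|_\infty+R$. Hence on the set $\{\rho(y,\cdot)\neq0\}$ the three velocity coordinates and the altitude coordinate of the state lie in a bounded set, so $\phi_j^2(x)\rho(y,x)$ is bounded for every $j$ indexing $v_1,v_2,v_3$ or $x_3$, and the bound stays finite when $y$ is further restricted to the physically admissible observation range (bounded relative altitude, bounded speeds), which is what gives the uniform statement in Assumption \ref{as:bounded_2}.

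The main obstacle is the horizontal-position pair $x_1,x_2$: these enter the model only through the bounded map $h_M$, so a single height measurement does not by itself confine them, and $\sup_x x_1^2\rho(y,x)$ would be infinite if $x_1$ were free to range over all of $\mathbb{R}$. Closing this gap requires using that the drone operates over a bounded terrain map, i.e. that $(x_1,x_2)$ may be taken to lie in the bounded region carrying $h_M$; with that restriction $\phi_1^2(x)\rho(y,x)$ and $\phi_2^2(x)\rho(y,x)$ are bounded by exactly the argument above, completing Assumptions \ref{as:bounded} and \ref{as:bounded_2}. I would therefore carry out the proof in the order: (i) the explicit Gaussian $K$ and its peak bound; (ii) the explicit form $\rho(y,x)=\rho_\eta(y-h_{det}(x))$ and the $\|\rho_\eta\|_\infty$ bound; (iii) the bounded-support-of-$\eta_k$ confinement argument for $\|\rho\phi_j^2\|$, treated directly for the velocity and altitude coordinates and via the bounded operational domain for the horizontal coordinates.
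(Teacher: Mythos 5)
Your overall route is the same as the paper's: write the Gaussian transition density explicitly and bound it by its peak value, use $\rho(y,x)=\rho_\eta\bigl(y-h_{det}(x)\bigr)\le\Vert\rho_\eta\Vert_\infty$ to get Assumption \ref{as:finite} and the $\Vert\rho\Vert$ bounds, and invoke the bounded support of $\rho_\eta$ to control $\Vert\rho\phi_j^2\Vert$. Where you diverge is precisely the step the paper compresses into \eqref{rho_bounded_support}: the paper asserts that $\Vert x\Vert^2\rho(y,x)$ vanishes once $\Vert(y,x)\Vert$ is large and concludes $\Vert\rho\phi_i^2\Vert_\infty<+\infty$ for $i=1,2$, whereas you observe, correctly, that the bounded support of $\rho_\eta$ only confines the velocity components and the altitude, not the horizontal position. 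Indeed, for any $y$ and any $\eta_0$ with $\rho_\eta(\eta_0)>0$, one may let $x_1\to\infty$ while choosing $v=y_{1:3}-\eta_{0,1:3}$ and $x_3=y_4-\eta_{0,4}+h_M(x_1,x_2)$ (possible because $h_M$ is bounded on all of $\mathbb{R}^2$), so that $y-h_{det}(x)=\eta_0$ and $x_1^2\rho(y,x)\to\infty$. Your scepticism therefore exposes a genuine gap in the paper's own argument, and the analogous issue affects the uniform-in-$y$ bounds of Assumption \ref{as:bounded_2} even for the velocity and altitude coordinates (take $y$ tracking $h_{det}(x)$ with $\Vert v\Vert$ or $\vert x_3\vert$ large), a point the paper does not discuss.

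The price of your repair is that it alters the model: confining $(x_1,x_2)$ to a bounded operational domain and restricting $y$ to a bounded set of physically admissible observations are hypotheses that appear nowhere in \eqref{sto_sys_lin_sat}--\eqref{obs_eq_extended_real_map} or in the surrounding text, which only assumes $h_M$ bounded with values in $\mathbb{R}^+$ and $\rho_\eta$ bounded with bounded support. So what you prove is a restricted variant of Proposition \ref{prop_ex_as}, not the statement as written; to recover the latter one would have to add such a compactness assumption explicitly, or replace boundedness of $h_M$ by a coercivity-type condition forcing $x_3-h_M(x_1,x_2)$ out of the support of $\rho_\eta$ as $\Vert(x_1,x_2)\Vert\to\infty$ (no strengthening of the decay of $\rho_\eta$ alone can help, since the likelihood simply does not decay in the horizontal directions). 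In short: same strategy as the paper, executed more carefully; your analysis shows that \eqref{rho_bounded_support} does not follow from the stated hypotheses, and your proposal is complete only under the additional, currently unstated, assumptions you introduce.
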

\begin{proof}
Clearly, from equation \eqref{obs_eq_extended_real_map}, Assumption \ref{as:finite} is satisfied. In the sequel, we prove that Assumption \ref{as:bounded_2} holds which implies that Assumption \ref{as:bounded} holds too. By independence of $\eta_k$ w.r.t. $X_k$, the likelihood function can be written as follows:
\begin{align*}
    \rho(Y_k,X_K)=\rho_{\eta}(Y_k-h_{det}(X_k))
\end{align*}
From equation \eqref{obs_eq_extended_real_map} and as $\rho_{\eta}$ has a bounded support one can get that
\begin{align}
{\Vert X_k\Vert}^2 \rho(Y_k,X_K)=0\;\textrm{for}\; \Vert (Y_k,X_k)\Vert\;\textrm{sufficiently large} \label{rho_bounded_support}
\end{align}
One can deduce, using the notation in Assumption \ref{as:bounded_2}, that: 
\begin{align*}
     \Vert \rho\phi_1^2\Vert_{\infty}&=\underset{x,y}{\textrm{sup}}\; \vert x_i^2\rho(y,x)\vert<+\infty,\; \mathrm{for}\; i=1,2.\\
\end{align*}
Besides, $\Vert \rho\Vert_{\infty}<+\infty$ by assumption on $\rho_{\eta}$. Finally, the  noise in the dynamics \eqref{sto_sys_lin_sat} being Gaussian,  $\Vert K \Vert_{\infty}<+\infty$ and Assumption \ref{as:bounded_2} holds.

\end{proof}

\section{Conclusion}

In this paper, a general formalisation of the joint problem of nonlinear optimal filtering and discrete-time stochastic optimal control is proposed. 
    Under natural assumptions on the cost function one can justify the use of two steps in the resolution of the problem. The first step is to solve a classical optimal estimation problem. Near-optimality of the empirical mean of a modified particle filter w.r.t. the mean square error has been shown which justifies the use of particle filtering in the case of MSE minimisation. The second step is to solve a modified optimal control problem with a new term coming from  optimal estimation. This establishes a connection with Explicit dual control where a new term representing a measure of information is empirically added to the cost. Actually, this empirical term can be seen as an approximation of the term coming from optimal estimation. Finally, this framework is illustrated by an example coming from Aerospace engineering namely Terrain-Aided Navigation.

\appendix


 

\section{Proof of Lemma \ref{prop_upper_bound_cond_mse}}\label{appendix_1}

 We recall that under Assumption \ref{as:gamma_1},
    $\langle\mu_{k\vert k-1},\rho \rangle>\gamma_k>\frac{\gamma_k}{2}$. Besides, $\forall k\geq1$, for $0<\epsilon_k<1$,  for a.a. $i_k$, there exists $N_k(i_k)>0$ such that, $\forall N\geq N_k(i_k)$:
\begin{align*}
    {P}( \langle\mu_{k\vert k-1}^N,\rho \rangle>\frac{\gamma_k}{2}\vert I_k=i_k)&\geq 1-\epsilon_k.
\end{align*}

Therefore, under Assumptions \ref{as:finite} and \ref{as:bounded}, we consider theorem $3.1$ of \cite{hu_general_2011} with $\psi=\phi_j$, with $p=r=2$ and with $\frac{\gamma_k}{2}$ instead of $\gamma_k$. It implies that, for $ j =1,..,n_x$ and $k\geq1$, for almost all $i_k$, there exist $ C_{k,j}>0$, $ M_{k,j}>0$ and $N_k(i_k)>0$ such that $\forall N\geq N_k(i_k)$: 
\begin{align}
E\left[{\vert \langle\mu_k,\phi_j \rangle - \langle\mu_k^N,\phi_j \rangle\vert}^2\vert I_k=i_k\right]&\leq \frac{C_{k,j}{\Vert\phi_j\Vert}_{k,2}^2}{N},\notag\\  
    E\left[{\Vert \langle\mu_k,\mathrm{Id} \rangle - \langle\mu_k^N,\mathrm{Id} \rangle\Vert}^2\vert I_k=i_k\right]&\leq \frac{\sum_{j=1}^{n}C_{k,j}{\Vert\phi_j\Vert}_{k,2}^2}{N}.\label{bound_error_particle_filter_identity}
\end{align}

By putting end to end the computations in \cite{hu_general_2011}, one can show that, for $k\geq 1$, and $j =1,..,n_x$,  $C_{k,j}$ and $M_{k,j}$ follow the coupled equations \eqref{M_ini} to \eqref{beta_current}.
 From the computation of $C_{k,j}$ and Assumption \ref{as:gamma_1}, one can also show that $N_k(i_k)$ can be chosen as such that $N_k(i_k)\geq\frac{{\Vert\rho\Vert}_{k,2}^2{\Vert K\Vert}^2\underset{j}{\max}\; C_{k-1\vert k-1,j} }{{\vert\frac{\gamma_k}{2}-\langle\mu_{k\vert k-1},\rho \rangle\vert}^2 \epsilon_k}$, with $0<\epsilon_k<1$.

One gets the result by combining equations \eqref{upper_bound_empirical_MSE} and \eqref{bound_error_particle_filter_identity} to obtain  $\forall \epsilon>0$, for almost all $i_k$, $\forall N\geq N_k(i_k)$: 
\begin{align*}
e^{cond}_{k,N}(i_k)&\leq(1+\epsilon)e^{cond}_{k,*}(i_k)+\left(1+\frac{1}{\epsilon}\right)\frac{\sum_{j=1}^{n}C_{k,j}{\Vert\phi_j\Vert}_{k,2}^2}{N},
\end{align*}

  \section{Proof of Theorem \ref{prop_convergence_cond_MSE}}\label{appendix_2}

 As Lemma \ref{prop_optimality_MSE} requires no assumptions,  Lemma \ref{prop_optimality_MSE} and \ref{prop_upper_bound_cond_mse} hold under Assumptions \ref{as:gamma_1}, \ref{as:finite} and \ref{as:bounded} and one gets directly that  $\forall\epsilon>0$, for a.a. $i_k$, for any $ N\geq N_k(i_k)$:
 \begin{align*}
e^{cond}_{k,*}(i_k)\leq e^{cond}_{k,N}(i_k)&\leq(1+\epsilon)e^{cond}_{k,*}(i_k)+\left(1+\frac{1}{\epsilon}\right)\frac{\sum_{j=1}^{n}C_{k,j}{\Vert\phi_j\Vert}_{k,2}^2}{N}.
\end{align*}
 By choosing $\epsilon=\frac{1}{N^q}$ with $0<q<1$, one can obtain from (\ref{bound_error_1})  that, for a.a. $i_k$, $\forall N\geq N_k(i_k)$:

 \begin{align}
 e^{cond}_{k,*}(i_k)\leq e^{cond}_{k,N}(i_k)\leq&\left(1+\frac{1}{N^q}\right)e^{cond}_{k,*}(i_k)+\left(1+N^q\right)\frac{\sum_{j=1}^{n}C_{k,j}{\Vert\phi_j\Vert}_{k,2}^2}{N}.\label{bound_error_1}
 \intertext{Moreover, the right-hand side converges such that:}
 \left(1+\frac{1}{N^q}\right)e^{cond}_{k,*}(i_k)+&\left(1+N^q\right)\frac{\sum_{j=1}^{n}C_{k,j}{\Vert\phi_j\Vert}_{k,2}^2}{N}\underset{N\rightarrow+\infty}{\longrightarrow}e^{cond}_{k,*}(i_k)\label{limit_bound_error_1}.
 \end{align}
 Thus, it is now clear from (\ref{bound_error_1}) and  (\ref{limit_bound_error_1}) that for a.a. $i_k$:
 \begin{align*}
 e^{cond}_{k,N}(i_k)&\underset{N\rightarrow+\infty}{\longrightarrow}e^{cond}_{k,*}(i_k),\notag
 \end{align*}

\section{Proof of Lemma \ref{prop_bound}}\label{appendix_3}

  One defines $C_{k,j}'$, and $M_{k,j}'$ recursively as follows, $\forall k\geq0$, $\forall j=1,..,n$:
  
   \begin{align}
     M_{0,j}'&=3,\label{M_ini'}\\
     C_{0,j}'&=8\widetilde{C},\label{C_ini'}\\
     M_{k,j}'&=2+\alpha_{k,j}'\left(1+\left(\frac{4-\epsilon_k}{1-\epsilon_k}+1\right)M_{k-1,j}'\right)\label{M_current'},\\
     {(C_{k,j}')}^{\frac{1}{2}}&=2^{\frac{3}{2}}{(\widetilde{C})}^{\frac{1}{2}}{(M_{k,j}')}^{\frac{1}{2}}+\frac{2^{\frac{3}{2}}{(\widetilde{C})}^{\frac{1}{2}}\beta_{k,j}'}{{(1-\epsilon_k)}^{\frac{1}{2}}}{(M_{k-1,j}')}^{\frac{1}{2}}\label{C_current'}\\
     &+\frac{{\Vert K\Vert}_{\infty}^{\frac{3}{2}}{\Vert\rho\Vert}_{\infty}\beta_{k,j}'}{(1-\epsilon_k)\frac{\gamma_k}{2}}{(M_{k-1,j}')}^{\frac{1}{2}}{(C_{k-1,j}')}^{\frac{1}{2}}+\Vert K \Vert_{\infty}\beta_{k,j}'{(C_{k-1,j}')}^{\frac{1}{2}}\notag,
\end{align}
     
\begin{align}
     \alpha_{k,j}'={\Vert K\Vert}_{\infty}^2\frac{ {\Vert \rho\Vert}_{\infty}( {\Vert \phi_j^2\rho\Vert}_{\infty}+\frac{\gamma_k}{2})}{\frac{\gamma_k^2}{2}},\qquad
     \beta_{k,j}'=\frac{ {\Vert \rho\Vert}_{\infty}( {\Vert \phi_j\rho\Vert}_{\infty}+\frac{\gamma_k}{2})}{\frac{\gamma_k^2}{2}},\label{beta_current'}.
 \end{align}
  Because of Assumption \ref{as:bounded_2}, the following inequalities hold, $\forall j=1,..,n$:
  \begin{align}
      {\Vert \rho\Vert}&\leq{\Vert \rho\Vert}_{\infty}<+\infty,&
      {\Vert \rho\phi_j^2\Vert}&\leq{\Vert \rho\phi_j^2\Vert}_{\infty}<+\infty,& \label{bound_proof}
      {\Vert \rho\phi_j\Vert}&\leq{\Vert \rho\phi_j\Vert}_{\infty}<+\infty.
  \end{align}
 Thus, by recursion on $k$, $\forall k\geq0$, $\forall j=1,..,n$
 \begin{align*}
      C_{k,j}'&<+\infty,&
    M_{k,j}'&<+\infty.
 \end{align*}

  From the definition of $\alpha_{k,j}$ and $\beta_{k,j}$ in equation \eqref{beta_current},  One needs to be able to bound the term $ \frac{1}{\vert\langle\mu_{k\vert k-1},\rho \rangle-\frac{\gamma_k}{2}\vert}$ from above uniformly in $i_k$.
 To do so, from Assumption \ref{as:gamma_1}, for a.a. $i_k$, one gets:
 \begin{align}
     \langle\mu_{k\vert k-1},\rho \rangle&\geq\gamma_k\geq\frac{\gamma_k}{2},\label{gamma_proof_1}\\
     \langle\mu_{k\vert k-1},\rho \rangle-\frac{\gamma_k}{2}&\geq\frac{\gamma_k}{2}>0,\notag\\
     \frac{1}{\vert\langle\mu_{k\vert k-1},\rho \rangle-\frac{\gamma_k}{2}\vert}=
     \frac{1}{\langle\mu_{k\vert k-1},\rho \rangle-\frac{\gamma_k}{2}}&\leq\frac{1}{\frac{\gamma_k}{2}}.\label{gamma_proof_2}
 \end{align}

 Consequently, from (\ref{bound_proof}) and (\ref{gamma_proof_2}), $\forall k\geq0$, $\forall j=1,..,n_x$, for a.a. $i_k$:
 \begin{align}
      \alpha_{k,j}&\leq \alpha_{k,j}',\label{alpha_beta_proof}&
      \beta_{k,j}&\leq \beta_{k,j}'.
 \end{align}
 
 Finally, from equations (\ref{gamma_proof_1}), (\ref{alpha_beta_proof}),  equations (\ref{M_ini}) to (\ref{beta_current}) and equations (\ref{M_ini'}) to (\ref{beta_current'}),  one can show by recursion on $k$, that $\forall k\geq0$, $\forall j=1,..,n$, for a.a. $i_k$:
 \begin{align*}
      C_{k,j}&\leq C_{k,j}',&
        M_{k,j}&\leq M_{k,j}'.
 \end{align*}
 Besides, since $\gamma_k$, ${\Vert \rho\Vert}_{\infty}$, ${\Vert \rho\phi_j^2\Vert}_{\infty}$ and ${\Vert \rho\phi_j\Vert}_{\infty}$ do not depend on $i_k$, $C_{k,j}'$ and $M_{k,j}'$  do not depend on $i_k$ either and one gets the result.

\section{Proof of Theorem \ref{prop_bound_total_mse}}\label{appendix_4}

  Under Assumptions \ref{as:gamma_1}, \ref{as:finite} and \ref{as:bounded_2}, Lemma \ref{prop_upper_bound_cond_mse} holds and implies that,  for a.a. $i_k$, $\forall N\geq N_k(i_k)$, $\forall \epsilon>0$:
\begin{align}
     E\left[{\Vert X_k - \widehat{X}_k^N\Vert}^2\vert I_k=i_k\right]&\leq\left(1+\epsilon\right)E\left[{\Vert X_k - \widehat{X}_k^*\Vert}^2\vert I_k=i_k\right]+\left(1+\frac{1}{\epsilon}\right)\frac{\sum_{j=1}^{n}C_{k,j}{\Vert\phi_j\Vert}_{k,2}^2}{N},\label{bound_error_proof}
\end{align}
where $   N_k(i_k)=\frac{{\Vert\rho\Vert}_{k,2}^2{\Vert K\Vert}^2\underset{j}{\text{max}}\; C_{k-1,j} }{{\vert\frac{\gamma_k}{2}-\langle\mu_{k\vert k-1},\rho \rangle\vert}^2 \epsilon_k}.$

First, one can notice from (\ref{bound_proof}), (\ref{gamma_proof_2}) and Lemma \ref{prop_bound} that, $\forall k\geq1$,  for a.a. $i_k$:
\begin{align*}
    N_k(i_k)\leq\frac{{\Vert\rho\Vert}_{\infty}^2{\Vert K\Vert}^2_{\infty} }{{(\frac{\gamma_k}{2})}^2 \epsilon_k}\underset{j}{\text{max}}\; C_{k-1,j}'\equiv \bar{N}_k.
\end{align*}
Note that $\bar{N}_k$ does not depend on $i_k$ then (\ref{bound_error_proof}) is true for a number of particles independent of $i_k$ i.e 
 $\forall \epsilon>0$, $\forall N\geq \bar{N}_k$, for a.a. $i_k$, :
\begin{align*}
     E\left[{\Vert X_k - \widehat{X}_k^N\Vert}^2\vert I_k=i_k\right]&\leq\left(1+\epsilon\right)E\left[{\Vert X_k - \widehat{X}_k^*\Vert}^2\vert I_k=i_k\right]+\left(1+\frac{1}{\epsilon}\right)\frac{\sum_{j=1}^{n}C_{k,j}{\Vert\phi_j\Vert}_{k,2}^2}{N}.
\end{align*}
By using Lemma \ref{prop_bound} again:
\begin{align}
     E\left[{\Vert X_k - \widehat{X}_k^N\Vert}^2\vert I_k=i_k\right]&\leq\left(1+\epsilon\right)E\left[{\Vert X_k - \widehat{X}_k^*\Vert}^2\vert I_k=i_k\right]+\left(1+\frac{1}{\epsilon}\right)\frac{\sum_{j=1}^{n}C_{k,j}'{\Vert\phi_j\Vert}_{k,2}^2}{N}.\notag\\
     \intertext{Now one is able to integrate over $i_k$, which leads to:}
     E\left[{\Vert X_k - \widehat{X}_k^N\Vert}^2\right]&\leq\left(1+\epsilon\right)E\left[{\Vert X_k - \widehat{X}_k^*\Vert}^2\right]+\left(1+\frac{1}{\epsilon}\right)\frac{\sum_{j=1}^{n}C_{k,j}'E\left[{\Vert\phi_j\Vert}_{k,2}^2\right]}{N}.\label{bound_error_proof_2}
\end{align}

We recall that  $E\left[{\Vert X_k - \widehat{X}_k^*\Vert}^2\vert I_k=i_k\right]$ and ${\Vert\phi_j\Vert}_{k,2}^2$ are integrable w.r.t. $i_k$ because $X_k$ is square-integrable by Assumption \ref{as:square_int}. Moreover, from Lemma \ref{prop_optimality_MSE}, and by taking $\epsilon=\frac{1}{N^q}$ with $0<q<1$, one can get additionally that $\forall k\geq0$, $\forall N\geq \widebar{N}_k$:
\begin{align*}
    e^{tot}_{k,*}\leq e^{tot}_{k,N}\leq\left(1+\frac{1}{N^q}\right)e^{tot}_{k,*}+&\left(1+N^q\right)\frac{\sum_{j=1}^{n}C_{k,j}'E\left[{\Vert\phi_j\Vert}_{k,2}^2\right]}{N}<+\infty.
\end{align*}
 The convergence result is straightforward from the previous equation.

\bibliographystyle{bibft}
\bibliography{bibfile}

\end{document}